\documentclass[11pt]{article}
\usepackage[a4paper,margin=29mm]{geometry}
\usepackage{amsmath,amssymb,amsthm,mathtools}
\usepackage{array,booktabs,microtype}
\usepackage[hidelinks]{hyperref}
\hypersetup{
 pdftitle={Maximum Energy of Integral Circulant Graphs with Same-Parity Exponents},
 pdfauthor={Jianwei Jiang and Chunhua Yang},
 pdfkeywords={integral circulant graph, graph energy, Ramanujan sum, weighted cut}
}
\allowdisplaybreaks[2]
\numberwithin{equation}{section}
\newtheorem{theorem}{Theorem}[section]
\newtheorem{lemma}[theorem]{Lemma}
\newtheorem{proposition}[theorem]{Proposition}

\theoremstyle{remark}

\newtheorem{theoremA}{Theorem}

\newcommand{\ICG}{\operatorname{ICG}}
\newcommand{\Energy}{\mathcal E}
\newcommand{\one}{\mathbf1}

\newcommand{\tr}{\operatorname{tr}}
\newcommand{\diag}{\operatorname{diag}}
\newcommand{\cut}{\operatorname{cut}}

\newcommand{\BB}{\mathsf B}
\newcommand{\PP}{\mathsf P}
\newcommand{\HH}{\mathsf H}
\newcommand{\EE}{\mathsf E}
\newcommand{\emb}{\mathcal J}
\newcommand{\Q}{\mathcal Q}
\newcommand{\Dodd}{\mathcal D_{\mathrm{odd}}}
\newcommand{\Deven}{\mathcal D_{\mathrm{even}}^{-}}
\newcommand{\ind}{\mathbf1}
\newcommand{\trans}{\mathsf T}
\title{Maximum Energy of Integral Circulant Graphs with Same-Parity Exponents}
\author{Jianwei Jiang\thanks{Email: \texttt{jianweijiangwf@sina.com}}\quad
Chunhua Yang\thanks{Corresponding author. Email: \texttt{chunhuayangwf@sina.com}}\\
\small School of Mathematics and Statistics, Weifang University,\\
\small Weifang, Shandong, China}
\date{}
\begin{document}
\maketitle

\begin{abstract}
The energy of a finite graph is the sum of the absolute values of the
eigenvalues of its adjacency matrix. Let $n=p^aq^b$, where $p,q\ge5$ are distinct primes
and $a,b\ge1$ have the same parity.
Among all nonempty sets $\mathcal D$ of proper divisors of $n$, the energy of the
corresponding integral circulant graph is maximised by exactly two divisor
sets: the divisors $p^iq^j$ with $i+j$ odd, and the divisors with $i+j$ even
after removing $p^aq^b$. The maximum energy is
\[
 \Energy_{\max}(n)
 =\frac12\left[
 n+d_a(p)d_b(q)-2g_a(p)g_b(q)
 \right],
\]
where
\[
 d_k(x)
 =(2k+1)x^k
 +4\sum_{j=0}^{k-1}(-1)^{k-j}(j+1)x^j,
 \qquad
 g_k(x)
 =\frac{x^k(x-1)+2(-1)^k}{x+1}.
\]

For the proof, weighted prime-power Ramanujan transforms yield an exact
tensor-product representation of the graph energy, reducing the extremal
problem to a weighted cut separating two prescribed vertices. After a sign
conjugation, the relevant transforms decompose into embedded path blocks; the
nontrivial blocks correspond to a weighted path together with $3$-chords of
weight $-1$. We prove a sharp lower bound for cuts separating the two
prescribed vertices in Kronecker products of these path blocks. For products
of two such nontrivial blocks, equality holds only for cuts that isolate one of
the two prescribed vertices. Combined with the energy-to-cut correspondence, these equality
cases yield exactly the two maximising divisor sets.
\end{abstract}

\noindent\textbf{Keywords.} Graph energy; integral circulant graph;
Ramanujan sum; Kronecker product; weighted cut.

\noindent\textbf{2020 Mathematics Subject Classification.}
05C50; 11A25; 15A18.

\section{Introduction}

Let $G$ be a finite graph with adjacency eigenvalues
$\lambda_1,\ldots,\lambda_n$. Its energy is
$\Energy(G)=\sum_{j=1}^{n}|\lambda_j|$; see Li, Shi and
Gutman~\cite{li2012energy}. For an integer $n>1$ and a nonempty set
$\mathcal D$ of proper divisors of $n$, let $\ICG_n(\mathcal D)$ be the
graph on $\mathbb Z/n\mathbb Z$ in which distinct vertices $u,v$ are
adjacent exactly when $\gcd(u-v,n)\in\mathcal D$. By
So~\cite{so2006integral}, these are precisely the integral circulant
graphs with at least one edge.

The extremal-energy problem for integral circulant graphs is best
understood in the prime-power setting. Sander and Sander obtained an
explicit formula for the energy and later gave a complete description of
the maximum energy together with all divisor sets attaining it
\cite{sander2011energy,sander2013exact}. For orders involving more than
one prime factor, the picture is less complete: Le and Sander
\cite{le2012extremal} used multiplicative divisor sets to obtain bounds
on the minimum and maximum energies. Earlier work of Ili\'c and
Ba\v{s}i\'c~\cite{ilicbasic2011energy} provided explicit formulas for the
energy of several families of integral circulant graphs and also addressed
the corresponding minimum-energy problem.

In our earlier work~\cite{jiangyang2026opposite}, we determined the maximum
energy of integral circulant graphs of order $n=p^{2r}q^{2s+1}$,
where $p,q$ are distinct odd primes, $q\ge5$, $r\ge1$, and $s\ge0$, and
proved that the even checkerboard divisor set is the unique maximiser.
In the notation of Section~\ref{sec:reduction}, with $a=2r$ and
$b=2s+1$, the proof is based on
\[
 2\Energy(\ICG_n(\mathcal D))
 =\|ne_t+Ky\|_1
 \le n+\|Ky\|_1
 \le n+d_a(p)d_b(q),
\]
where $d_k(x)$ is defined below. For opposite-parity exponents, both
inequalities can hold with equality. When $a$ and $b$ have the same parity,
equality in the second inequality is still attained by the corresponding
checkerboard vector, but for this vector $(Ky)_t<0$. Since $ne_t$ is positive only at
$t$, cancellation occurs in the last coordinate and the first inequality
is strict. Accordingly, in this paper we directly maximise
$\frac12\|ne_t+Ky\|_1$.

For $k\ge1$ and $x>1$, put
\begin{equation*}
 d_k(x)=(2k+1)x^k+4\sum_{j=0}^{k-1}(-1)^{k-j}(j+1)x^j,
 \qquad
 g_k(x)=\frac{x^k(x-1)+2(-1)^k}{x+1}.
\end{equation*}
The function $g_k$ is also a polynomial:
$g_k(x)=x^k+2\sum_{j=0}^{k-1}(-1)^{k-j}x^j$.
The quantities $d_k(x)$ and $g_k(x)$ are, respectively, the trace and
the last diagonal entry of the diagonal weight matrix used
in~\cite[Section~3.1]{jiangyang2026opposite}.

\begin{theoremA}\label{thm:main}
Let $p,q\ge5$ be distinct primes, and let $a,b\ge1$ be integers with
$a+b$ even. Put $n=p^aq^b$. Among all nonempty sets $\mathcal D$ of proper divisors of $n$, the
maximum energy of $\ICG_n(\mathcal D)$ is
\begin{equation}\label{eq:main-energy}
 \Energy_{\max}(n)
 =\frac12\left[
 n+d_a(p)d_b(q)-2g_a(p)g_b(q)
 \right].
\end{equation}
Exactly two divisor sets attain this value:
\begin{equation}\label{eq:maximising-sets}
\begin{aligned}
 \Dodd
 &=\{p^iq^j:0\le i\le a,\ 0\le j\le b,\ i+j\text{ odd}\},\\
 \Deven
 &=\{p^iq^j:0\le i\le a,\ 0\le j\le b,\ i+j\text{ even}\}
   \setminus\{p^aq^b\}.
\end{aligned}
\end{equation}
\end{theoremA}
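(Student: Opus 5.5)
The plan follows the route sketched in the abstract and introduction. The first step is to write the energy through the eigenvalues of $\ICG_n(\mathcal D)$. These are sums of Ramanujan sums $c(k,n/d)$ over $d\in\mathcal D$, and $c(k,m)$ is multiplicative in $m$. So I encode $\mathcal D$ by a $0/1$ vector $y$ indexed by pairs $(i,j)\in\{0,\dots,a\}\times\{0,\dots,b\}$. The eigenvalue then depends only on $\gcd(k,n)=p^{i'}q^{j'}$, and counting multiplicities $\varphi(n/\gcd(k,n))$ gives a weighted prime-power Ramanujan transform $K=K_a(p)\otimes K_b(q)$. With this, $2\Energy(\ICG_n(\mathcal D))=\|ne_t+Ky\|_1$, where $t=(a,b)$ is the coordinate of the trivial eigenvalue (the degree) and $y_t=0$ because $p^aq^b\notin\mathcal D$.

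Next I rewrite $\|ne_t+Ky\|_1=\max_{\sigma\in\{\pm1\}}\sigma^\trans(ne_t+Ky)$ and exchange the maxima. Maximising over $y$ and $\sigma$ jointly becomes a quadratic problem $\max\,\sigma^\trans K y$ over sign vectors and $0/1$ vectors, plus the bonus $n\sigma_t$. Writing $y=(\one+\tau)/2$ with $\tau$ a sign vector, and using the constraint on $y_t$, turns this into a weighted cut problem on the vertex set of the product of the two path graphs. The requirement that $\sigma_t$ stay positive while $y_t=0$ becomes the condition that the cut separates two prescribed vertices. Conjugating by the checkerboard signs $(-1)^{i+j}$ makes $K_a(p)$ and $K_b(q)$ decompose into embedded path blocks: the trivial blocks are rank one, and the nontrivial blocks are weighted paths with weight $-1$ on the $3$-chords. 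The constant $d_a(p)d_b(q)$ is the trace-type value of the unconstrained checkerboard optimum, which is the bound from the earlier paper. The correction $2g_a(p)g_b(q)$ is exactly the cost of the cancellation in coordinate $t$. So I need to show that any cut separating the prescribed vertices loses at least $2g_a(p)g_b(q)$ relative to the unconstrained value.

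The main obstacle is this sharp lower bound for separating cuts in the Kronecker product of two nontrivial path blocks, together with its equality cases. My first attempt is a one-factor lower bound. For a single block I would show that a separating cut costs at least the last diagonal weight $g_k(x)$, by an induction along the path. The $3$-chords have weight $-1$ while the path weights are of order $x^j$ with $x\ge5$, so the chords can be absorbed edge by edge; this is where the hypothesis $p,q\ge5$ is used. I would then tensor the bound using a decomposition of each block Laplacian as $g\,e_te_t^\trans$ plus a positive semidefinite remainder, and I expect a Schur-product or positivity argument to give the product bound $g_a(p)g_b(q)$. The remaining cases are products with a trivial block. Since those are rank one, they reduce to the single-path bound.

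For equality, the slack terms must vanish. I would argue that this forces the cut to isolate one of the two prescribed vertices, which gives exactly two sign patterns. Translating back through $\sigma$ and $y$, isolating one vertex gives the odd checkerboard set $\Dodd$, and isolating the other gives the even checkerboard set with $p^aq^b$ removed, $\Deven$. Finally I would compute the energy of $\Dodd$ directly from the formula for $2\Energy$ to confirm that it equals \eqref{eq:main-energy}. This also confirms that the bound is attained, which fixes the maximum.
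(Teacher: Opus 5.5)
Your reduction to a two-terminal cut problem is the paper's, with two small repairs. The identity $2\Energy=\|ne_t+Ky\|_1$ holds for the centred vector $y=2x-\one$ with $y_t=-1$, not for the $0/1$ vector. Fixing $\sigma_t=1$ needs the observation $(ne_t+Ky)_t=\sum_{i,j}\varphi(p^{a-i})\varphi(q^{b-j})(1+y_{ij})\ge0$.

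The genuine gap is the core cut bound. The one-factor lemma you plan to tensor is false. It says that every separating cut of a single block costs at least its last row sum $g_k(x)$. For $\HH_j(c)$ with $j\ge2$, absorbing the $-1$ chords leaves weight $c-3$ on the interior (type~I) edges. Cutting one of them gives a separating cut of weight $c-3<c-1=h(\HH_j(c))$; this is exactly the paper's single-path lemma. The full factor fails too. In $\BB_2(x)=(x-1)\HH_2(x)+\EE_2$, take terminals at index $2$ and the cut $\zeta=(-1,1,1)$, $\xi=(1,-1,-1)$. It crosses entries of weight $0,(x-1)^2,-(x-1),-(x-1),1$, for a total of $(x-2)^2$. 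At $x=5$ this is $9$, while $g_2(5)=17$. So $\cut\ge g_a(p)g_b(q)$ is a genuinely two-factor phenomenon, with no factor-level inequality to multiply. Even a correct factor bound could not be tensored by a Schur-product or positivity argument. A cut of $D\otimes A$ is given by arbitrary sign vectors on the product index set, not by products of factor cuts. Also, the signed bipartite cut form of a Kronecker product is not the Kronecker product of the factor forms.

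What is missing is a direct analysis of $\HH_k(d)\otimes\HH_j(c)$ on the product graph $\Gamma_{j,k}$, as the paper does. Each negative edge (a chord in one factor times a short edge in the other) is replaced by a three-edge path in $\Gamma_{j,k}$, using $\chi_{e_1}+\chi_{e_2}+\chi_{e_3}-\chi_e\in\{0,2\}$. This leaves residual weights $\alpha\beta-m_1\beta-m_2\alpha$, i.e.\ $15$, $6$, $0$ on SS, mixed and II edges when $c=d=5$. The bound then comes from the combinatorics of the subgraph without II edges:
\begin{itemize}
\item its only SS-bridges are the two terminal edges;
\item contracting SS-components gives a grid whose boundary pairs carry two mixed edges each, so every nonterminal cut costs at least $21=16+5$;
\item general $c,d$ follow by bilinear expansion from the minimal parameters, with an extra chord--chord positive edge needed for strictness when $j=1$.
\end{itemize}

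You also skip the lifting step, which is where the difficulty of the same-parity case sits. For even exponents, the products $\EE_b\otimes\HH_j(p)$ reduce to a single path and can \emph{lose} up to $2w_j(p)$. Reducing them to the single-path bound therefore gives a deficit, not a nonnegative term. The paper pays for this loss with the gap $\Q(\HH_a(p),\HH_b(q))\ge5$ of the outermost block, using $5\,w_a(p)w_b(q)>2\sum_j w_j(p)+2\sum_k w_k(q)$. For odd exponents the last block is $\HH_1(x-1)$, which is not rank one; it is handled as an H block with $c\ge4$. Without these ingredients, ``the slack terms must vanish'' does not yet give the equality characterisation.
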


The two sets in~\eqref{eq:maximising-sets} partition the proper divisors
of $n$, so the corresponding graphs are complementary. Together with
\cite{jiangyang2026opposite}, Theorem A determines the maximum energy and all
maximising divisor sets for every order $p^aq^b$, where $p,q\ge5$ are
distinct primes and $a,b\ge1$.

To analyse the cancellation retained in the exact objective,
we depart from the semidefinite argument
of~\cite{jiangyang2026opposite} while retaining its weighted
Ramanujan transform framework. After a sign conjugation,
we convert the problem into a weighted cut problem in which
two prescribed vertices lie on opposite sides of the cut. The
sign-conjugated prime-power transforms are decomposed into embedded
path blocks, together with an additional last-coordinate term in the
even-exponent case. Each nontrivial block is supported on a weighted path with
$3$-chords of weight $-1$. The contribution of each negative $3$-chord is
transferred to the three edges of its associated path, leaving
nonnegative correction terms and reducing the cut estimate to a residual
weighted product graph. We then establish a sharp lower bound and
characterise its equality cases for Kronecker products of two such blocks
(Theorem~\ref{thm:path-blocks}).
These blockwise estimates are lifted to the full transforms; in the
even-exponent case, the strict contribution of the outermost nontrivial block
controls the additional mixed terms. Combined with the energy-to-cut
correspondence, this yields both the maximum in~\eqref{eq:main-energy} and
the two maximising divisor sets in~\eqref{eq:maximising-sets}.

\section{From graph energy to a cut sum}\label{sec:reduction}

\subsection{The transforms and the divisor coordinates}

For $k\ge1$ and a prime $p$, as in \cite[Eq.~(2.2)]{jiangyang2026opposite}, define the
weighted prime-power Ramanujan transform $T_k(p)$ by
\begin{equation*}
 (T_k(p))_{ui}=\varphi(p^{k-u})c_{p^{k-i}}(p^u)
 \qquad(0\le u,i\le k),
\end{equation*}
where $\varphi$ is Euler's totient function and $c_h$ is the Ramanujan
sum. By the standard prime-power formula for Ramanujan sums
\cite[Section~8.3]{apostol1976}, the entries of $T_k(p)$ admit the following
explicit polynomial expressions. We use the same formulas to define
$T_k(x)$ for every real parameter $x>1$; see
\cite[Proposition~2.1]{jiangyang2026opposite}:
\begin{equation}\label{eq:T-entries}
 (T_k(x))_{ij}=
 \begin{cases}
  0,&i,j<k,\ i+j\le k-2,\\
  -x^{k-1}(x-1),&i,j<k,\ i+j=k-1,\\
  x^{2k-i-j-2}(x-1)^2,&i,j<k,\ i+j\ge k,
 \end{cases}
\end{equation}
with boundary entries
\begin{equation}\label{eq:T-boundary}
 (T_k(x))_{ik}=(T_k(x))_{ki}=x^{k-i-1}(x-1)\quad(i<k),
 \qquad (T_k(x))_{kk}=1.
\end{equation}
In particular, $T_k(x)$ is symmetric and
\begin{equation}\label{eq:factor-row-sum}
 T_k(x)\one=x^ke_k.
\end{equation}
Here and below, $e_k$ is the last standard basis vector and $\one$ is
an all-one vector of the required size.

Let $s_k=(1,-1,\ldots,(-1)^k)^{\trans}$ and
$S_k=\diag(s_k)$. We also use the diagonal matrix $\Delta_k(x)$
introduced in~\cite[Section~3.1]{jiangyang2026opposite}. The following
properties of $\Delta_k(x)$ will be used below; see
\cite[Lemma~3.1 and equations~(3.4), (3.6)--(3.7)]{jiangyang2026opposite}:
\begin{equation}\label{eq:factor-parity}
 T_k(x)s_k=(-1)^k\Delta_k(x)s_k,\qquad
 \tr\Delta_k(x)=d_k(x),\qquad
 (\Delta_k(x))_{kk}=g_k(x).
\end{equation}
For $x\ge5$, $\Delta_k(x)$ is a positive diagonal matrix; see
\cite[Section~3.1]{jiangyang2026opposite}.

Fix distinct primes $p,q\ge5$ and positive integers $a,b$, and set
\begin{equation*}
n=p^aq^b,\qquad
 \mathcal I=\{0,\ldots,a\}\times\{0,\ldots,b\},\qquad
 t=(a,b),\qquad N=(a+1)(b+1).
\end{equation*}
Vectors indexed by $\mathcal I$ are ordered by
$\iota(i,j)=i+(a+1)j$, so the first coordinate varies fastest.
A divisor set is encoded by a binary matrix $X=(x_{ij})$, where
$x_{ij}=1$ precisely when $p^iq^j\in\mathcal D$. Write
$x=\operatorname{vec}(X)$ in this order and put
\begin{equation}\label{eq:centred-vector}
 y=2x-\one,\qquad x=\frac12(\one+y).
\end{equation}
The proper-divisor condition is $x_t=0$, or equivalently $y_t=-1$.
We allow the empty divisor set in intermediate upper bounds; the
corresponding graph has energy zero.

Define
\begin{equation*}
\begin{gathered}
 K=T_b(q)\otimes T_a(p),\qquad
 \Delta=\Delta_b(q)\otimes\Delta_a(p),\\
 s=s_b\otimes s_a,\qquad S=S_b\otimes S_a=\diag(s),\qquad
 \tau=d_a(p)d_b(q),\qquad \gamma=g_a(p)g_b(q).
\end{gathered}
\end{equation*}
Thus $s_{i,j}=(-1)^{i+j}$ and $\Delta_{tt}=\gamma$.
With the above ordering, the standard vectorisation identity
$\operatorname{vec}(AXB^{\trans})=(B\otimes A)\operatorname{vec}(X)$ gives
$Kx=\operatorname{vec}\!\bigl(T_a(p)XT_b(q)^{\trans}\bigr)$. Hence, by the
energy formula in \cite[Proposition~2.2]{jiangyang2026opposite},
$\Energy(\ICG_n(\mathcal D))=\|Kx\|_1$.
Also, by~\eqref{eq:factor-row-sum},
\begin{equation}\label{eq:mean-term}
 K\one
 =(T_b(q)\one)\otimes(T_a(p)\one)
 =q^be_b\otimes p^ae_a=ne_t.
\end{equation}
Combining~\eqref{eq:centred-vector} and~\eqref{eq:mean-term} gives
\begin{equation}\label{eq:F}
 2\Energy(\ICG_n(\mathcal D))
 =\|K(\one+y)\|_1=\|ne_t+Ky\|_1=:F(y).
\end{equation}

\subsection{The prescribed terminals}

For $k\ge1$ and $x>1$ put
\begin{equation}\label{eq:B-definition}
 \BB_k(x)=(-1)^kS_kT_k(x)S_k.
\end{equation}
Since $S_k\one=s_k$, the first identity in~\eqref{eq:factor-parity} gives
\begin{equation}\label{eq:B-row-sum}
 \BB_k(x)\one
 =(-1)^kS_kT_k(x)s_k
 =S_k\Delta_k(x)s_k
 =\Delta_k(x)S_ks_k
 =\Delta_k(x)\one.
\end{equation}
Here $S_k$ and $\Delta_k(x)$ commute because both are diagonal, and
$S_ks_k=\one$. In particular, the last row sum of $\BB_k(x)$ is
$g_k(x)$.
We now use the assumption that $a+b$ is even. Put $C=SKS$. Then
\begin{equation}\label{eq:C}
 C=\BB_b(q)\otimes\BB_a(p),\qquad
 C\one=\Delta\one,\qquad
 \sum_{u,v\in\mathcal I}C_{uv}=\tau.
\end{equation}
The first equality follows from the mixed-product property and the scalar identity
$(\alpha A)\otimes(\beta B)=\alpha\beta(A\otimes B)$, together with the definition of
$\BB_k(x)$ and $(-1)^{a+b}=1$. The second
follows by right-multiplying the first by
$\one=\one_{b+1}\otimes\one_{a+1}$, using the mixed-product property and
\eqref{eq:B-row-sum}. Left-multiplying the second by $\one^{\trans}$, and
using that $\Delta$ is diagonal together with
$\tr(A\otimes B)=\tr(A)\tr(B)$, gives the third. Since $t=(a,b)$ is the last
index in the above ordering, taking the $t$-th coordinate of
$C\one=\Delta\one$ gives $\sum_{v\in\mathcal I}C_{tv}=\Delta_{tt}=\gamma$.
Thus the last row sum of $C$ is $\gamma$. Since $C$ is symmetric, its last
column sum is also $\gamma$.

Regard $C_{uv}$ as the weight of an edge from row vertex $u$ to column
vertex $v$. Row and column vertices are separate copies of $\mathcal I$;
even $C_{tt}$ joins two different vertices. For sign vectors
$\zeta,\xi\in\{\pm1\}^N$, define
\begin{equation}\label{eq:cut-definition}
 \cut_C(\zeta,\xi)
 =\sum_{u,v\in\mathcal I}C_{uv}\ind_{\{\zeta_u\ne\xi_v\}}.
\end{equation}
Some weights may be negative. The quantity in~\eqref{eq:cut-definition}
is the sum of the weights crossing the sign partition.
We consider cuts satisfying
\begin{equation}\label{eq:pins}
 \zeta_t=1,\qquad \xi_t=-1.
\end{equation}
The two vertices indexed by $t$ are called the terminals. The
\emph{terminal cuts} are the cuts that isolate one of them:
\begin{equation}\label{eq:terminal-cuts}
 (\zeta,\xi)=(-\one+2e_t,-\one)
 \quad\text{or}\quad
 (\zeta,\xi)=(\one,\one-2e_t).
\end{equation}
Since the entries in the last row and last column of $C$ each sum to $\gamma$,
both terminal cuts have weight $\gamma$.

\begin{lemma}\label{lem:energy-cut}
For a fixed $y\in\{\pm1\}^N$ with $y_t=-1$, put $\xi=Sy$. Then
\begin{equation}\label{eq:energy-cut}
 F(y)=n+\tau-2\min_{\substack{\zeta\in\{\pm1\}^N\\\zeta_t=1}}
                       \cut_C(\zeta,\xi).
\end{equation}
Consequently, if every cut satisfying~\eqref{eq:pins} has weight at
least $\gamma$, and equality occurs only in~\eqref{eq:terminal-cuts},
then
\begin{equation*}
 \max_{y_t=-1}F(y)=n+\tau-2\gamma,
\end{equation*}
and the only maximising vectors are $-s$ and $s-2e_t$.
\end{lemma}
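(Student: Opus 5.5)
The plan is to rewrite $F(y)$ in the sign-conjugated coordinates and use the elementary identity $\|w\|_1=\max_{\zeta\in\{\pm1\}^N}\zeta^{\trans}w$.

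\textbf{Conjugating the objective.} Since $S$ is a diagonal $\pm1$ matrix, $\|w\|_1=\|Sw\|_1$. Moreover $s_t=(-1)^{a+b}=1$, so $Se_t=e_t$. Using $K=SCS$ and $y=S\xi$, this gives
\[
 F(y)=\|ne_t+SC\xi\|_1=\|ne_t+C\xi\|_1=\max_{\zeta}\bigl(n\zeta_t+\zeta^{\trans}C\xi\bigr).
\]

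\textbf{Fixing the sign at the terminal.} I claim the maximum may be restricted to $\zeta_t=1$, i.e.\ that $(ne_t+C\xi)_t\ge0$. The row $t$ of $C$ is $\BB_b(q)_{b,\cdot}\otimes\BB_a(p)_{a,\cdot}$. By \eqref{eq:T-boundary} its entries have absolute values $q^{b-j-1}(q-1)$ or $1$ in the second factor, and similarly in the first, so the factor absolute sums are $q^b$ and $p^a$. Hence $|(C\xi)_t|\le\sum_v|C_{tv}|=p^aq^b=n$, and the coordinate is nonnegative. If it is zero, the choice of $\zeta_t$ does not matter. Either way,
\[
 F(y)=\max_{\zeta_t=1}\bigl(n+\zeta^{\trans}C\xi\bigr).
\]

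\textbf{Converting to a cut.} For signs, $\zeta_u\xi_v=1-2\,\ind_{\{\zeta_u\ne\xi_v\}}$. Therefore
\[
 \zeta^{\trans}C\xi=\sum_{u,v}C_{uv}-2\cut_C(\zeta,\xi)=\tau-2\cut_C(\zeta,\xi)
\]
by \eqref{eq:C}. This yields \eqref{eq:energy-cut}. Note that $\xi_t=s_ty_t=-1$, so every pair $(\zeta,\xi)$ occurring in \eqref{eq:energy-cut} satisfies \eqref{eq:pins}.

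\textbf{The consequence.} Under the hypothesis, every such cut has weight at least $\gamma$. Hence $F(y)\le n+\tau-2\gamma$ for every admissible $y$, with equality if and only if the minimising $\zeta$ together with $\xi$ forms a terminal cut \eqref{eq:terminal-cuts}. The two terminal cuts have $\xi=-\one$ and $\xi=\one-2e_t$. Since $y=S\xi$ and $Se_t=e_t$, these correspond to $y=-s$ and $y=s-2e_t$. Both have $y_t=-1$, and for each of them the minimum over $\zeta$ is attained by the matching $\zeta$ and equals $\gamma$. So the bound is attained, and exactly by these two vectors.

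\textbf{Main obstacle.} The only delicate step is the terminal sign, i.e.\ showing that restricting to $\zeta_t=1$ loses nothing. This rests on the explicit bound $\sum_v|C_{tv}|=n$ coming from the boundary entries \eqref{eq:T-boundary}. As a safer alternative, one can instead use $|n+c|\ge n+c$ for the $t$-coordinate; that gives only a lower bound for $F$ in general, so the row-sum estimate is the route to follow.
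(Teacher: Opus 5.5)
Your proposal is correct and follows essentially the same route as the paper: you dualise $\|\cdot\|_1$ over sign vectors, fix the terminal sign using nonnegativity of the $t$-th coordinate, conjugate by $S$, and convert $\zeta^{\trans}C\xi$ into $\tau-2\cut_C(\zeta,\xi)$. The differences are only cosmetic: you conjugate before dualising, and you bound $(C\xi)_t$ by the absolute row sum $\sum_v|C_{tv}|=n$, which is the paper's fact that $K_{t,(i,j)}=\varphi(q^{b-j})\varphi(p^{a-i})>0$ sum to $n$, since $|C_{tv}|=K_{tv}$; like the paper, you also use that both terminal cuts have weight $\gamma$ because the last row and column sums of $C$ equal $\gamma$.
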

\begin{proof}
By the entry formula for the Kronecker product and~\eqref{eq:T-boundary},
write $w_{i,j}:=K_{t,(i,j)}=(T_b(q))_{b,j}(T_a(p))_{a,i}
=\varphi(q^{b-j})\varphi(p^{a-i})>0$ for $0\le i\le a$ and
$0\le j\le b$. Taking the last coordinate of~\eqref{eq:mean-term} gives
$\sum_{(i,j)\in\mathcal I}w_{i,j}=n$. Hence, since
$y_{i,j}\in\{\pm1\}$ for $0\le i\le a$ and $0\le j\le b$,
\begin{equation}\label{eq:last-coordinate-positive}
 n+(Ky)_t=\sum_{(i,j)\in\mathcal I}w_{i,j}(1+y_{i,j})\ge0.
\end{equation}
In the identity $\|v\|_1=\max_{z\in\{\pm1\}^N} z^{\trans}v$, the maximum is attained by choosing $z_i$ to have the same sign as $v_i$
whenever $v_i\ne0$, with either choice allowed when $v_i=0$.
By~\eqref{eq:last-coordinate-positive}, the last coordinate of
$v=ne_t+Ky$ is nonnegative, so a maximising sign vector can always be
chosen with $z_t=1$. Hence
\begin{equation}\label{eq:sign-dual}
 F(y)=n+\max_{\substack{z\in\{\pm1\}^N\\z_t=1}}z^{\trans}Ky.
\end{equation}
Since $s_t=1$ and $S^2=I$, the substitutions $\zeta=Sz$ and $\xi=Sy$
give $\zeta_t=s_tz_t=1$ and $\xi_t=s_ty_t=-1$, so the constraints
become~\eqref{eq:pins}. Moreover, $z=S\zeta$ and $y=S\xi$, and hence,
by $C=SKS$, $z^{\trans}Ky=\zeta^{\trans}C\xi$. For
$\zeta_u,\xi_v\in\{\pm1\}$,
$\zeta_u\xi_v=1-2\ind_{\{\zeta_u\ne\xi_v\}}$.
Therefore, by~\eqref{eq:C} and~\eqref{eq:cut-definition},
\begin{equation}\label{eq:bilinear-cut}
\begin{aligned}
 \zeta^{\trans}C\xi
 &=\sum_{u,v}C_{uv}
       \bigl(1-2\ind_{\{\zeta_u\ne\xi_v\}}\bigr)\\
 &=\tau-2\cut_C(\zeta,\xi).
\end{aligned}
\end{equation}
Substituting~\eqref{eq:bilinear-cut} into~\eqref{eq:sign-dual} gives
\eqref{eq:energy-cut}.

Under the stated cut bound,~\eqref{eq:energy-cut} gives
$F(y)\le n+\tau-2\gamma$. Both terminal cuts attain equality. By~\eqref{eq:terminal-cuts}, their
column sign vectors are $-\one$ and $\one-2e_t$. Since $\xi=Sy$ and
$S^2=I$, these correspond respectively to $y=S(-\one)=-s$ and
$y=S(\one-2e_t)=s-2e_t$.

Conversely, if $y$ is a maximiser, then the minimum
in~\eqref{eq:energy-cut} equals $\gamma$. Under the above assumption, any
cut attaining this minimum must be one of the two terminal cuts, so
$\xi=-\one$ or $\xi=\one-2e_t$, equivalently, $y=-s$ or
$y=s-2e_t$.
Therefore these are the only maximisers.
\end{proof}

By Lemma~\ref{lem:energy-cut}, it remains to prove that every cut of $C$
satisfying~\eqref{eq:pins} has weight at least $\gamma$, with equality only
for the two terminal cuts in~\eqref{eq:terminal-cuts}. Since~\eqref{eq:C}
gives $C=\BB_b(q)\otimes\BB_a(p)$, we first decompose the factors
$\BB_k(x)$. The resulting nontrivial matrices will be represented by weighted
paths in Section~\ref{sec:path-cuts}, where the required cut bounds for their
Kronecker products are established.

\section{Decomposing a transform into path matrices}\label{sec:decomposition}

For a $(j+1)\times(j+1)$ matrix $A$ and $k\ge j$, let
$\emb_{k,j}(A)$ be the $(k+1)\times(k+1)$ matrix with $A$ in its
lower-right block on coordinates $k-j,\ldots,k$ and zero elsewhere.
In particular, the last row and last column of $A$ are embedded in the last row and last column of the larger matrix. Put $\EE_k=e_ke_k^{\trans}$ and define
\begin{equation}\label{eq:path-matrix}
 (\PP_k)_{ij}=\ind_{\{i+j=k-1\text{ or }i+j=k\}}
 \quad(0\le i,j\le k),\qquad \PP_0=[1].
\end{equation}
For $j\ge1$ and a real number $c$, set
\begin{equation}\label{eq:H-definition}
 \HH_j(c)=c\PP_j-\emb_{j,j-1}(\PP_{j-1}).
\end{equation}
For a square matrix $A$ with a designated last coordinate, write $h(A)$
for its last row sum. The definitions give
\begin{equation}\label{eq:path-row-sums}
 h(\PP_j)=1,\qquad h(\HH_j(c))=c-1,\qquad h(\EE_k)=1.
\end{equation}
The same sums hold for the last columns, because these matrices are
symmetric.

\begin{lemma}
Set $\BB_0(x)=[1]$. For $k\ge1$ and $x>1$,
\begin{equation}\label{eq:B-recurrence}
 \BB_k(x)=x^{k-1}(x-1)\PP_k
               -\emb_{k,k-1}(\BB_{k-1}(x)).
\end{equation}
Consequently,
\begin{equation}\label{eq:B-decomposition}
\begin{split}
 \BB_k(x)={}&
 \sum_{\substack{j=k,k-2,\ldots\\j\ge2}}
       x^{j-2}(x-1)\emb_{k,j}(\HH_j(x))\\
 &+\begin{cases}
    \emb_{k,1}(\HH_1(x-1)),&k\text{ odd},\\
    \EE_k,&k\text{ even}.
   \end{cases}
\end{split}
\end{equation}
\end{lemma}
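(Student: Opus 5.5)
We prove the recurrence~\eqref{eq:B-recurrence} entrywise and then unroll it. By~\eqref{eq:B-definition}, $(\BB_k(x))_{ij}=(-1)^{k+i+j}(T_k(x))_{ij}$. We compare both sides of~\eqref{eq:B-recurrence} in three regions of index pairs: (i) $i,j<k$; (ii) the boundary, where one index equals $k$ and the other is smaller; (iii) the corner $(k,k)$.

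\emph{Step 1: the right-hand side.} For $k-1\le i,j\le\dots$ the embedding $\emb_{k,k-1}$ shifts indices by one. For $i,j\ge1$,
\[
 \bigl(\emb_{k,k-1}(\BB_{k-1})\bigr)_{ij}=(\BB_{k-1})_{i-1,j-1},
\]
and these entries vanish when $i=0$ or $j=0$. We then use~\eqref{eq:T-entries} with $k-1$ in place of $k$. The condition $(i-1)+(j-1)$ compared with $k-3$, $k-2$ becomes $i+j$ compared with $k-1$, $k$. The sign $(-1)^{k-1+(i-1)+(j-1)}=-(-1)^{k+i+j}$ absorbs the minus sign in the recurrence.

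\emph{Step 2: region (i), $i,j<k$.}
\begin{itemize}
\item For $i+j\le k-2$ both sides vanish; the path matrix contributes nothing there.
\item For $i+j=k-1$ with $i,j\ge1$, the shifted index sum is $k-3\le(k-1)-2$, so the shifted term vanishes. $\PP_k$ contributes $x^{k-1}(x-1)$ with sign $(-1)^{2k-1}=-1$, matching $-x^{k-1}(x-1)\cdot(-1)^{k+i+j}$ after conjugation. The cases $i=0$ or $j=0$ are the same, since the shifted term is zero anyway.
\item For $i+j=k$ with $i,j\ge1$, $\PP_k$ gives $x^{k-1}(x-1)$. The shifted term gives $-(-x^{k-2}(x-1))$ with the appropriate sign, and the sum is $x^{k-2}(x-1)^2$. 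This is the identity $x^{k-1}(x-1)-x^{k-2}(x-1)=x^{k-2}(x-1)^2$, which must match the sign $(-1)^{2k}=+1$. Checking these signs carefully is the main bookkeeping.
\item For $i+j\ge k+1$, only the shifted term survives. It gives $x^{2(k-1)-(i-1)-(j-1)-2}(x-1)^2=x^{2k-i-j-2}(x-1)^2$, with the sign flip cancelling the minus sign.
\end{itemize}

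\emph{Step 3: regions (ii) and (iii).}
\begin{itemize}
\item For $j=k$ and $i<k$, $\PP_k$ contributes only at $i=0$ (since $i+k\in\{k-1,k\}$ forces $i=0$). That entry is $x^{k-1}(x-1)$ with sign $+$, matching $(T_k)_{0k}$ since $(-1)^{2k}=1$.
\item For $1\le i<k$, the shifted term gives $-(-1)^{(k-1)+(i-1)+(k-1)}x^{k-i-1}(x-1)$, which agrees with $(-1)^{k+i+k}x^{k-i-1}(x-1)$.
\item At the corner $(k,k)$, $\PP_k$ is $0$ (since $2k>k$), and the shifted term is $-(\BB_{k-1})_{k-1,k-1}=-(-1)^{3(k-1)}$. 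This equals $(-1)^{3k}(T_k)_{kk}=(-1)^k$.
\item The case $k=1$ follows from $\BB_0=[1]$ by the same computation; we check it directly.
\end{itemize}

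\emph{Step 4: unrolling to~\eqref{eq:B-decomposition}.} Apply~\eqref{eq:B-recurrence} twice and use $\emb_{k,k-1}\circ\emb_{k-1,j}=\emb_{k,j}$:
\[
 \BB_k=x^{k-1}(x-1)\PP_k-x^{k-2}(x-1)\emb_{k,k-1}(\PP_{k-1})+\emb_{k,k-2}(\BB_{k-2}).
\]
By~\eqref{eq:H-definition}, the first two terms combine to $x^{k-2}(x-1)\HH_k(x)$. Induction on $k$ in steps of two then gives the sum over $j=k,k-2,\dots\ge2$. The process terminates as follows.
\begin{itemize}
\item If $k$ is even, it ends at $\emb_{k,0}(\BB_0)=\EE_k$.
\item If $k$ is odd, it ends at $\emb_{k,1}(\BB_1)$. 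Since $\BB_1=(x-1)\PP_1-\emb_{1,0}(\PP_0)=\HH_1(x-1)$, this is the stated term.
\end{itemize}

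The main obstacle is the sign bookkeeping in Steps 2 and 3, in particular the antidiagonal $i+j=k$ and the boundary row.
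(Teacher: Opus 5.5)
Your proposal is correct and follows essentially the same route as the paper: an entrywise verification of the recurrence over the regions $i+j\le k-2$, $i+j=k-1$, $i+j=k$, $i+j\ge k+1$, the boundary row and column, and the corner $(k,k)$, followed by the two-step unrolling in which $x^{k-1}(x-1)\PP_k-x^{k-2}(x-1)\emb_{k,k-1}(\PP_{k-1})=x^{k-2}(x-1)\HH_k(x)$, with the iteration stopping at $\emb_{k,0}(\BB_0)=\EE_k$ or $\emb_{k,1}(\BB_1)=\emb_{k,1}(\HH_1(x-1))$. Two small points of presentation, neither affecting the computations: in your $i+j=k-1$ bullet the factor $-1$ comes from conjugating the negative entry $-x^{k-1}(x-1)$ of $T_k$, not from $\PP_k$; and the transposed boundary entries $(k,i)$ should be dismissed explicitly by symmetry.
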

\begin{proof}
By~\eqref{eq:B-definition}, each entry of $\BB_k$ is
$(-1)^{k+i+j}(T_k)_{ij}$. For $1\le i,j\le k$, the $(i,j)$-entry of
$\emb_{k,k-1}(\BB_{k-1})$ is $(\BB_{k-1})_{i-1,j-1}$, while its
entries in row $0$ and column $0$ are zero. We verify that
$\BB_k+\emb_{k,k-1}(\BB_{k-1})$ has the two constant antidiagonals
on the right of~\eqref{eq:B-recurrence}.

If $i+j\le k-2$, both entries are zero. If $i+j=k-1$, the embedded
entry is zero and~\eqref{eq:T-entries} gives
$(\BB_k)_{ij}=x^{k-1}(x-1)$.
If $i+j=k$ and $i,j<k$, the two entries add to
$x^{k-2}(x-1)^2+x^{k-2}(x-1)=x^{k-1}(x-1)$.
For the two positions $(0,k)$ and $(k,0)$, the same value follows
from~\eqref{eq:T-boundary}, and the embedded entry vanishes.

If $i+j\ge k+1$ and $i,j<k$, then by~\eqref{eq:B-definition}
and~\eqref{eq:T-entries},
$(\BB_k)_{ij}=(-1)^{k+i+j}x^{2k-i-j-2}(x-1)^2$. The $(i,j)$-entry
of the embedded matrix is $(\BB_{k-1})_{i-1,j-1}$. Since
$(i-1)+(j-1)=i+j-2\ge k-1$, applying the same formulas to
$\BB_{k-1}$ gives
$(\BB_{k-1})_{i-1,j-1}=-(-1)^{k+i+j}x^{2k-i-j-2}(x-1)^2
=-(\BB_k)_{ij}$. Thus the two entries cancel.

On the last row or column, let the other index satisfy $1\le i\le k-1$.
By~\eqref{eq:B-definition} and~\eqref{eq:T-boundary}, at position $(k,i)$,
$(\BB_k)_{k,i}=(-1)^i x^{k-i-1}(x-1)$. By the definition of the embedding, the entry of
$\emb_{k,k-1}(\BB_{k-1})$ at position $(k,i)$ is
$(\BB_{k-1})_{k-1,i-1}$, which equals
$(-1)^{i-1}x^{k-i-1}(x-1)=-(\BB_k)_{k,i}$ by the same formulas.
Hence these two entries cancel. By symmetry, the same holds at position
$(i,k)$.

Finally, by~\eqref{eq:B-definition} and~\eqref{eq:T-boundary}, at position
$(k,k)$ we have $(\BB_k)_{k,k}=(-1)^k$, while the embedded entry is
$(\BB_{k-1})_{k-1,k-1}=(-1)^{k-1}$.
These two entries also cancel. This proves
\eqref{eq:B-recurrence}.

For $k=1$, since $\BB_0(x)=[1]$, equation~\eqref{eq:B-recurrence} gives
$\BB_1(x)=(x-1)\PP_1-\emb_{1,0}([1])=\HH_1(x-1)$, so~\eqref{eq:B-decomposition} holds.

Now let $k\ge2$. Substitute~\eqref{eq:B-recurrence} for $\BB_{k-1}$.
The first two terms combine as
\begin{equation}\label{eq:two-step}
\begin{aligned}
 \BB_k(x)
 &=x^{k-2}(x-1)
       \bigl(x\PP_k-\emb_{k,k-1}(\PP_{k-1})\bigr)
       +\emb_{k,k-2}(\BB_{k-2}(x))\\
 &=x^{k-2}(x-1)\HH_k(x)
       +\emb_{k,k-2}(\BB_{k-2}(x)).
\end{aligned}
\end{equation}
Iterating~\eqref{eq:two-step} reduces the index by two at each step.
When $k$ is odd, the iteration terminates at
$\BB_1(x)=\HH_1(x-1)$. When $k$ is even, it terminates at
$\BB_0(x)=[1]$, and $\emb_{k,0}(\BB_0)=\emb_{k,0}([1])=\EE_k$.
\end{proof}

For later use, for two symmetric matrices $A,D$ and a fixed
cut of $D\otimes A$ define
\begin{equation}\label{eq:Q-general}
 \Q(A,D;\zeta,\xi)
 =\cut_{D\otimes A}(\zeta,\xi)-h(A)h(D).
\end{equation}
We suppress $\zeta,\xi$ when the cut is fixed. The expression is linear
in each matrix argument.

\section{Cut bounds for products of path matrices}\label{sec:path-cuts}

\subsection{Short edges and 3-chords}

\paragraph{Path representation.}
We represent $\PP_j$ by the path
\[
0-1-\cdots-(2j+1).
\]
Indeed, as $\ell$ ranges from $0$ to $j$, place column $\ell$ at
position $2\ell+1$; as $i$ ranges from $0$ to $j$, place row $i$ at
position $2(j-i)$. Together these positions are exactly
$0,1,\ldots,2j+1$. By~\eqref{eq:path-matrix}, for each $i$, row $i$ is adjacent precisely to
columns $j-i-1$ and $j-i$, whenever these indices lie in range. Since
column $\ell$ is placed at position $2\ell+1$, the corresponding column
vertices are located at $2(j-i)-1$ and $2(j-i)+1$.
Since row $i$ is placed at position $2(j-i)$, these are precisely the
two neighbouring positions of the row vertex on the path, with the
out-of-range case omitted at the endpoints. As $i$ and $\ell$ range
over $0,\ldots,j$, respectively, these row and column vertices, together
with the adjacencies above, give exactly this path. We call the edges of this path the \emph{short edges}.
The endpoints $0$ and $2j+1$ correspond to row $j$ and column $j$,
respectively, and hence are exactly the two terminals.

\begin{lemma}\label{lem:H-structure}
For $j\ge1$ and real $c$, the edges of $\HH_j(c)$ in the above path
representation are
\begin{equation}\label{eq:H-edges}
\begin{array}{c|c|c}
\text{endpoints}&\text{range}&\text{weight}\\ \hline
(2u,2u+1)&u=0,j&c\\
(2u,2u+1)&1\le u\le j-1&c-1\\
(2u+1,2u+2)&0\le u\le j-1&c\\
(2u,2u+3)&0\le u\le j-1&-1.
\end{array}
\end{equation}
Thus, in the above path representation, $\HH_j(c)$ has the same
short-edge positions as $\PP_j$, with weight $c$ or $c-1$ assigned to
each such position. Those of weight $c$ are called S-edges, while those
of weight $c-1$ are called I-edges; when $j=1$, there are no I-edges.
Accordingly, we also call a short edge of $\PP_j$ type~$S$ or type~$I$
according as the corresponding short-edge position in $\HH_j(c)$ has
weight $c$ or $c-1$, respectively. In addition, $\HH_j(c)$ has the $3$-chords $(2u,2u+3)$,
$0\le u\le j-1$, each of weight $-1$.
Moreover, every $S$-edge lies in exactly one path associated with a $3$-chord, whereas
every $I$-edge lies in exactly two such paths.
\end{lemma}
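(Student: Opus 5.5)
The plan is a direct entrywise computation: express $\HH_j(c)=c\PP_j-\emb_{j,j-1}(\PP_{j-1})$ in matrix coordinates, then translate each nonzero entry into the path labelling. By~\eqref{eq:path-matrix}, $c\PP_j$ has entry $c$ exactly at the positions $(i,\ell)$ with $i+\ell\in\{j-1,j\}$. The embedded matrix $\emb_{j,j-1}(\PP_{j-1})$ has entry $1$ at $(i,\ell)$ with $1\le i,\ell\le j$ and $(i-1)+(\ell-1)\in\{j-2,j-1\}$, that is, $i+\ell\in\{j,j+1\}$, again with $i,\ell\ge1$. So I will split the positions into three classes:
\[
 i+\ell=j-1,\qquad i+\ell=j,\qquad i+\ell=j+1 .
\]
On $i+\ell=j-1$ only $c\PP_j$ contributes, giving weight $c$. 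On $i+\ell=j$ the weight is $c$ if $i=0$ or $\ell=0$, and $c-1$ otherwise. On $i+\ell=j+1$ with $i,\ell\ge1$ only the subtracted term contributes, giving weight $-1$.

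Next I will convert these classes to path positions, using row $i\mapsto 2(j-i)$ and column $\ell\mapsto 2\ell+1$.
\begin{itemize}
\item For $i+\ell=j$: row position $2\ell$, column position $2\ell+1$, so these are the edges $(2u,2u+1)$ with $u=\ell$. The endpoint cases $\ell=0$ and $\ell=j$ (equivalently $i=j$ or $i=0$) get weight $c$; the cases $1\le u\le j-1$ get weight $c-1$.
\item For $i+\ell=j-1$: row position $2(\ell+1)$, column position $2\ell+1$, giving the edges $(2u+1,2u+2)$ with $u=\ell\in\{0,\ldots,j-1\}$, all of weight $c$.
\item For $i+\ell=j+1$ with $\ell\ge1$ and $i\ge1$: row position $2(\ell-1)$, column position $2\ell+1$, giving the chords $(2u,2u+3)$ with $u=\ell-1\in\{0,\ldots,j-1\}$, all of weight $-1$.
\end{itemize}
Every entry of the matrix is counted once as a row-to-column edge, which matches the cut convention, so this establishes table~\eqref{eq:H-edges}. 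The statement that the short-edge positions coincide with those of $\PP_j$ is then immediate. When $j=1$ the range $1\le u\le j-1$ is empty, so there are no I-edges.

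For the final claim, the path associated with the chord $(2u,2u+3)$ consists of the three short edges $(2u,2u+1)$, $(2u+1,2u+2)$ and $(2u+2,2u+3)$. A short edge of the form $(2w+1,2w+2)$ lies only in the path for $u=w$, and it is always an S-edge. An edge $(2w,2w+1)$ lies in the paths for $u=w$ (as the first edge) and $u=w-1$ (as the last edge), subject to $0\le u\le j-1$. Both indices are admissible exactly when $1\le w\le j-1$, which are the I-edges. For $w=0$ or $w=j$ only one index survives, and these are S-edges. This yields the counts one and two.

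No step is a real obstacle. The only care needed is the index bookkeeping: the boundary cases $i=0$ or $\ell=0$ inside the embedded block, and the fact that the embedding shifts indices by one so the antidiagonals move from $\{j-2,j-1\}$ to $\{j,j+1\}$. I will double-check these against the small case $j=1$.
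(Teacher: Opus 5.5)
Your proposal is correct and follows the paper's proof essentially step for step. It uses the same split of positions by $i+\ell\in\{j-1,j,j+1\}$, the same translation via row $i\mapsto 2(j-i)$ and column $\ell\mapsto 2\ell+1$, and the same count of how many chord paths pass through each short edge; your explicit observation that an edge $(2w+1,2w+2)$ can only be a middle edge, while $(2w,2w+1)$ is the first edge for $u=w$ and the last edge for $u=w-1$, is if anything slightly more careful than the paper's wording.
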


\begin{proof}
By~\eqref{eq:H-definition}, $\HH_j(c)=c\PP_j-\emb_{j,j-1}(\PP_{j-1})$.
By~\eqref{eq:path-matrix},
\begin{equation}\label{eq:P-support}
(\PP_j)_{i\ell}=1
\quad\Longleftrightarrow\quad
i+\ell=j-1
\ \text{or}\ 
i+\ell=j.
\end{equation}

For the embedded term, its zeroth row and zeroth column are zero, while
for $1\le i,\ell\le j$,
$\bigl(\emb_{j,j-1}(\PP_{j-1})\bigr)_{i\ell}
=(\PP_{j-1})_{i-1,\ell-1}$. For $1\le i,\ell\le j$, this entry equals $1$ if and only if
$(i-1)+(\ell-1)=j-2$ or $(i-1)+(\ell-1)=j-1$, which is equivalent to
\begin{equation}\label{eq:embedded-support}
i+\ell=j
\quad\text{or}\quad
i+\ell=j+1.
\end{equation}

We now compare~\eqref{eq:P-support} and~\eqref{eq:embedded-support}.

If $i+\ell=j-1$, only $c\PP_j$ contributes. With $u=\ell$, the
corresponding row and column vertices lie at $2(j-i)=2u+2$ and
$2\ell+1=2u+1$, respectively. Thus these are the short edges
$(2u+1,2u+2)$, $0\le u\le j-1$, each of weight $c$.

If $i+\ell=j$, the corresponding short edge is $(2u,2u+1)$ with
$u=\ell$. For $1\le u\le j-1$, both $c\PP_j$ and the embedded term
contribute, so the weight is $c-1$.
For $u=0$ or $u=j$, one of $i,\ell$ is zero, so the position lies
outside the embedded block and the weight remains $c$.

Finally, if $i+\ell=j+1$, then $(\PP_j)_{i\ell}=0$, whereas the
embedded term equals $1$. With $u=\ell-1$, the corresponding vertices
lie at $2(j-i)=2u$ and $2\ell+1=2u+3$. Hence these positions give the
$3$-chords $(2u,2u+3)$, $0\le u\le j-1$, each of weight $-1$. This
proves~\eqref{eq:H-edges}.

It remains to count the paths containing each short edge. Two consecutive $3$-chords $(2u,2u+3)$ and $(2u+2,2u+5)$ have the
respective paths $2u-(2u+1)-(2u+2)-(2u+3)$ and
$(2u+2)-(2u+3)-(2u+4)-(2u+5)$, which share the short edge $(2u+2,2u+3)$. Hence every internal edge
$(2v,2v+1)$, $1\le v\le j-1$, namely every type~I edge, lies in exactly two consecutive paths of this form. The remaining short edges are precisely the
type~S edges, and each lies in exactly one such path. This proves the final
assertion.
\end{proof}

\paragraph{Chord replacement.}
Let $\chi_e$ be $1$ when the endpoints of an edge $e$ have different
signs and $0$ otherwise. For a path with edges $e_1,e_2,e_3$ and the
chord $e$ joining its endpoints,
\begin{equation}\label{eq:three-step}
 \chi_{e_1}+\chi_{e_2}+\chi_{e_3}-\chi_e\in\{0,2\}.
\end{equation}
Indeed, if $\chi_e=0$, then the endpoints of the path have
the same sign, so
$\chi_{e_1}+\chi_{e_2}+\chi_{e_3}\in\{0,2\}$. If $\chi_e=1$, then the
endpoints have opposite signs, so
$\chi_{e_1}+\chi_{e_2}+\chi_{e_3}\in\{1,3\}$. Hence, in either case,
\eqref{eq:three-step} holds.

If a chord has weight $-w$, $w>0$, then
\begin{equation}\label{eq:negative-replacement}
 -w\chi_e
 =-w\sum_{f\in P(e)}\chi_f
    +w\left(\sum_{f\in P(e)}\chi_f-\chi_e\right),
\end{equation}
where $P(e)$ is the path joining its endpoints. The second term is nonnegative
by~\eqref{eq:three-step}. Thus the contribution $-w\chi_e$ of the
negative chord can be rewritten by subtracting $w$ from the weight of
each short edge in $P(e)$, together with the separate nonnegative
correction $w\left(\sum_{f\in P(e)}\chi_f-\chi_e\right)$.

\begin{lemma}\label{lem:single-path}
For cuts separating the endpoints of a single path,
\begin{equation*}
 \min\cut_{\HH_j(c)}=
 \begin{cases}
 c-3,&j\ge2,\ c\ge3,\\
 c-1,&j=1,\ c\ge1.
 \end{cases}
\end{equation*}
In particular, $\cut_{\HH_j(c)}-h(\HH_j(c))\ge-2$ when $j\ge2$
and $c\ge3$.
\end{lemma}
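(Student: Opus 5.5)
We prove the lower bound by chord replacement and then exhibit a cut attaining it. The graph of $\HH_j(c)$ is the path $0-1-\cdots-(2j+1)$ with short-edge weights from~\eqref{eq:H-edges}, together with the $j$ negative $3$-chords $(2u,2u+3)$. A cut separating the endpoints is a sign assignment on the vertices $0,\ldots,2j+1$ in which vertices $0$ and $2j+1$ receive opposite signs.

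\emph{Lower bound.} Apply~\eqref{eq:negative-replacement} to every $3$-chord (weight $-1$, so $w=1$). By the last assertion of Lemma~\ref{lem:H-structure}, each $S$-edge lies in exactly one chord path and each $I$-edge in exactly two. After the replacement, every short edge therefore has residual weight $c-1$ (S-edges $c-1$, I-edges $(c-1)-2=c-3$ wait: I-edges start at $c-1$ and lose $2$, giving $c-3$). So the S-edges carry weight $c-1$, the I-edges carry weight $c-3$, and we add the nonnegative corrections $\sum_{f\in P(e)}\chi_f-\chi_e\in\{0,2\}$. For $c\ge3$ all residual weights are nonnegative. Since the endpoints are separated, at least one short edge is cut, so
\[
 \cut\ge\min(c-1,\,c-3)+(\text{corrections})\ge c-3
\]
when $j\ge2$, because I-edges exist. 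For $j=1$ there are no I-edges. There is a single chord $(0,3)$, and the three path edges are all S-edges with residual weight $c-1$. This gives $\cut\ge c-1$ for $c\ge1$.

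\emph{Attainment.} For $j\ge2$, cut a single I-edge $(2,3)$: assign sign $+$ to vertices $0,1,2$ and sign $-$ to vertices $3,\ldots,2j+1$. The only short edge cut is $(2,3)$, of weight $c-1$. Among the chords, $(0,3)$ and $(2,5)$ cross the cut, each of weight $-1$, and the remaining chords do not. The total is $c-1-2=c-3$. Equivalently, in the replaced form, each chord path containing $(2,3)$ has exactly one cut edge and a cut chord, so its correction is $0$. For $j=1$, cutting the edge $(0,1)$ gives $c-1$, since the chord $(0,3)$ is also cut and contributes $-1$.

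\emph{Consequence.} By~\eqref{eq:path-row-sums}, $h(\HH_j(c))=c-1$, so for $j\ge2$ and $c\ge3$ we get $\cut-h\ge(c-3)-(c-1)=-2$.

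The only delicate step is the bookkeeping in the replacement: I-edges are covered by exactly two chord paths, which is what yields residual weight $c-3$ and the correct constant. This covering count is supplied by Lemma~\ref{lem:H-structure}, so no real obstacle remains.
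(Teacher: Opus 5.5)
Your proof is correct and follows the paper's argument. You use chord replacement with the covering counts of Lemma~\ref{lem:H-structure} to get residual weights $c-1$ and $c-3$, and a cut through a single I-edge attains the bound (your $(2,3)$, which you check directly as $(c-1)-2$ rather than via vanishing corrections); just remove the self-correcting aside about residual weights, and note that the bound $\cut\ge c-3$ rests on the nonnegativity of all residual weights and corrections, while the existence of I-edges is needed only for attainment.
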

\begin{proof}
Apply the above chord replacement to all negative $3$-chords in $\HH_j(c)$.
By Lemma~\ref{lem:H-structure}, the weight of each $S$-edge is reduced
once, whereas the weight of each $I$-edge is reduced twice. Hence the resulting short-edge weights are respectively $c-1$ and $c-3$.

Any cut separating the two endpoints crosses at least one short edge.
Hence, if $j\ge2$ and $c\ge3$, $\cut_{\HH_j(c)}\ge c-3$.
To attain this bound, choose a type~I edge and assign one sign to all
positions before it and the opposite sign to all positions after it.
Then exactly this short edge crosses the cut. Moreover, each such path contains at most one sign change, so for every
chord $e$, $\sum_{f\in P(e)}\chi_f=\chi_e$.
Thus all correction terms in~\eqref{eq:negative-replacement} vanish,
and the cut weight is exactly $c-3$.

If $j=1$, there are no type~I edges, and every modified short edge has
weight $c-1$. Hence every endpoint-separating cut has weight at least
$c-1$. Isolating either endpoint crosses exactly one short edge and
makes the correction term vanish, so the minimum is $c-1$.

Finally, by~\eqref{eq:path-row-sums}, $h(\HH_j(c))=c-1$, and therefore,
for $j\ge2$, $\cut_{\HH_j(c)}-h(\HH_j(c))\ge(c-3)-(c-1)=-2$.
\end{proof}

\subsection{The product of two paths}

For $\PP_k\otimes\PP_j$, reverse the row indices in each coordinate
and write the row and column vertices as
\[
 r_{u,v}\quad(0\le u\le j,\ 0\le v\le k),\qquad
 c_{x,y}\quad(0\le x\le j,\ 0\le y\le k),
\]
where $u$ and $x$ are the fast coordinates, in accordance with the
ordering convention introduced in Section~\ref{sec:reduction}.
Thus $r_{u,v}$ corresponds to the original row $(j-u,k-v)$, while
$c_{x,y}$ corresponds to column $(x,y)$. The corresponding entry of $\PP_k\otimes\PP_j$ is
$(\PP_k\otimes\PP_j)_{(j-u,k-v),(x,y)}
=(\PP_k)_{k-v,y}(\PP_j)_{j-u,x}$. Hence it is nonzero precisely when
$(\PP_j)_{j-u,x}\ne0$ and $(\PP_k)_{k-v,y}\ne0$. By
~\eqref{eq:path-matrix}, these conditions are equivalent to
$x\in\{u-1,u\}$ and $y\in\{v-1,v\}$, respectively.
Let $\Gamma_{j,k}$ be the corresponding bipartite graph, whose two parts consist of the row vertices and the column vertices, respectively. A row vertex $r_{u,v}$ is adjacent to a column vertex $c_{x,y}$ if and only if the corresponding entry of $\PP_k\otimes\PP_j$ is nonzero. By the calculation above, its adjacency rule is
\begin{equation}\label{eq:grid-adjacency}
 r_{u,v}\sim c_{x,y}
 \quad\Longleftrightarrow\quad
 x\in\{u-1,u\},\quad y\in\{v-1,v\},
\end{equation}
with indices outside their ranges discarded. The terminals are
$r_{0,0}$ and $c_{j,k}$, corresponding to the last row and the last
column of $\PP_k\otimes\PP_j$, respectively.

By the definition of $\Gamma_{j,k}$, if
$r_{u,v}c_{x,y}$ is an edge of $\Gamma_{j,k}$, then the corresponding
entry of $\PP_k\otimes\PP_j$ is nonzero. Hence
$(\PP_j)_{j-u,x}\neq0$ and $(\PP_k)_{k-v,y}\neq0$.
Under the path representations of $\PP_j$ and $\PP_k$, these two nonzero
entries correspond to short edges of the respective paths. If these short edges
are of types $A$ and $B$, respectively, then $r_{u,v}c_{x,y}$ is called an
$AB$-edge. Thus the possible types are
$SS$, $SI$, $IS$, and $II$, where the first letter records the type of the short
edge corresponding to $(\PP_j)_{j-u,x}$, and the second records the type of the
short edge corresponding to $(\PP_k)_{k-v,y}$.

\begin{lemma}\label{lem:product-path}
For $j,k\ge1$, the graph $\Gamma_{j,k}$ is connected and its only
bridges are
\begin{equation}\label{eq:terminal-bridges}
 r_{0,0}c_{0,0}\quad\text{and}\quad r_{j,k}c_{j,k}.
\end{equation}
Consequently, every nonterminal cut separating the terminals crosses
at least two edges of $\Gamma_{j,k}$.
\end{lemma}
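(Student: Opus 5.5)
Observe first that $\Gamma_{j,k}$ is the bipartite tensor (Kronecker) product of the two paths representing $\PP_j$ and $\PP_k$. A row vertex $r_{u,v}$ is joined to $c_{x,y}$ exactly when $x\in\{u-1,u\}$ and $y\in\{v-1,v\}$. So every "interior" row vertex with $1\le u\le j$ and $1\le v\le k$ has the four neighbours $c_{u-1,v-1},c_{u,v-1},c_{u-1,v},c_{u,v}$, and the edges are organised in unit squares of the grid $\{0,\dots,j\}\times\{0,\dots,k\}$.

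For connectivity, I would show that every column vertex $c_{x,y}$ is joined to $c_{x+1,y}$ and to $c_{x,y+1}$ by a path of length two.
\begin{itemize}
\item The row vertex $r_{x+1,y+1}$ (when $x<j$, $y<k$) is adjacent to $c_{x,y}$, $c_{x+1,y}$, $c_{x,y+1}$ and $c_{x+1,y+1}$.
\item On the boundary lines $y=k$ or $x=j$, use $r_{x+1,k}$, which is adjacent to $c_{x,k-1},c_{x+1,k-1},c_{x,k},c_{x+1,k}$. The case $x=j$ is symmetric.
\end{itemize}
Hence all column vertices lie in one component. Every row vertex has at least one column neighbour; for example, $r_{0,0}$ is adjacent to $c_{0,0}$, since $x\in\{-1,0\}$ and $y\in\{-1,0\}$. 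So $\Gamma_{j,k}$ is connected.

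For the bridges, I would use the standard fact that an edge is not a bridge iff it lies on a cycle.
\begin{itemize}
\item The edges around $r_{u,v}$ with $1\le u\le j$, $1\le v\le k$ lie on the $4$-cycle $r_{u,v}-c_{u-1,v-1}-r_{u,v-1}-c_{u,v}-r_{u,v}$. Here $r_{u,v-1}$ is adjacent to $c_{u-1,v-1}$ and $c_{u,v-1}$; I need to check it also reaches $c_{u,v}$. It does not, so I would instead use cycles inside the $2\times2$ block: $r_{u,v}$ and $r_{u+1,v+1}$ (when in range) share both $c_{u,v}$ and the diagonal structure.
\item More robustly, any two distinct neighbours of a row vertex $r_{u,v}$ are both adjacent to some other row vertex (a neighbouring square). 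When $r_{u,v}$ has degree $\ge2$, each of its edges lies on a cycle through that second row vertex.
\end{itemize}
The degree-one vertices are the likely bridge sources. Degrees are $|\{u-1,u\}\cap[0,j]|\cdot|\{v-1,v\}\cap[0,k]|$ for rows. This equals $1$ only for $r_{0,0}$, with unique neighbour $c_{0,0}$. Symmetrically, $c_{x,y}$ has row neighbours $u\in\{x,x+1\}$, $v\in\{y,y+1\}$, which is a single one only for $c_{j,k}$, whose neighbour is $r_{j,k}$. Thus these two pendant edges \eqref{eq:terminal-bridges} are bridges.

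For every other edge $r_{u,v}c_{x,y}$, I would exhibit a $4$-cycle $r_{u,v}-c_{x,y}-r_{u',v'}-c_{x',y'}-r_{u,v}$. When $r_{u,v}$ has degree $\ge2$, there is another neighbour $c_{x',y'}$ differing in one coordinate. Then I take $r_{u',v'}$ to be the row vertex obtained by shifting in the orthogonal direction, or, when one of $u,v$ is at its boundary, by shifting in the same direction. The main obstacle is this case check at the grid boundary, including the degenerate cases $j=1$ or $k=1$. There I must verify that every edge other than the two pendant ones lies in some $4$-cycle $r-c-r'-c'$, that is, that its endpoints have a second common "square". I would handle it by noting that, once $r_{0,0}$ and $c_{j,k}$ are removed, the remaining graph has all degrees $\ge2$. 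I would then argue that it is the union of $4$-cycles determined by pairs of adjacent row vertices, namely those differing by $1$ in exactly one coordinate, sharing two columns.

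Finally, the consequence follows. A cut separating the terminals $r_{0,0}$ and $c_{j,k}$ crosses the edge set of some terminal–terminal path. Removing one edge disconnects the terminals only if that edge is a bridge. So a cut crossing exactly one edge must cross one of the two pendant edges and cross nothing else. That is precisely a terminal cut isolating $r_{0,0}$ or $c_{j,k}$. Every nonterminal separating cut therefore crosses at least two edges.
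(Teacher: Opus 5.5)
Your connectivity argument (linking column vertices through a shared row vertex) and your deduction of the final consequence from the bridge classification are fine. The gap is the bridge classification itself, which is the substance of the lemma. You never establish that every edge other than $r_{0,0}c_{0,0}$ and $r_{j,k}c_{j,k}$ lies on a cycle, and the justifications you offer do not work as stated:
\begin{itemize}
\item The claim that any two distinct neighbours of $r_{u,v}$ have a second common row neighbour is false. For $u,v\ge1$ the diagonal neighbours $c_{u-1,v-1}$ and $c_{u,v}$ have $r_{u,v}$ as their only common neighbour. Similarly, $r_{u,v}$ and $r_{u+1,v+1}$ share only $c_{u,v}$, so they span no $4$-cycle.
\item Minimum degree at least two after deleting $r_{0,0}$ and $c_{j,k}$ does not exclude bridges.
\item The boundary rule ``shift in the same direction'' fails. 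Take the edge $r_{j,v}c_{j,v}$ with $1\le v<k$ and partner $c_{j,v-1}$: no row vertex other than $r_{j,v}$ is adjacent to both columns. What works is changing the partner to $c_{j-1,v}$ and closing the cycle through $r_{j,v+1}$.
\end{itemize}
So the boundary case check you defer is exactly the part that has to be done.

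It closes quickly once the right condition is isolated. For an edge $r_{u,v}c_{x,y}$ with $x<j$ and $v\ge1$, let $u'$ be the element of $\{x,x+1\}$ other than $u$, and $y'$ the element of $\{v-1,v\}$ other than $y$. Then $r_{u,v},c_{x,y},r_{u',v},c_{x,y'}$ is a $4$-cycle with all indices in range. If instead $y<k$ and $u\ge1$, use $r_{u,v},c_{x,y},r_{u,v'},c_{x',y}$. Here $v'$ is the element of $\{y,y+1\}$ other than $v$, and $x'$ the element of $\{u-1,u\}$ other than $x$.

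If both conditions fail, then ($x=j$ or $v=0$) and ($y=k$ or $u=0$). Because $u\in\{x,x+1\}$, $y\in\{v-1,v\}$ and $j,k\ge1$, the combinations $x=j$, $u=0$ and $v=0$, $y=k$ are impossible. This leaves only $r_{0,0}c_{0,0}$ and $r_{j,k}c_{j,k}$.

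This is the paper's argument. It is also what makes your ``union of $4$-cycles from adjacent row pairs'' description precise: $r_{u,v}$ and $r_{u+1,v}$ share two columns only when $v\ge1$, and $r_{u,v}$ and $r_{u,v+1}$ only when $u\ge1$.
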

\begin{proof}
We first show that $\Gamma_{j,k}$ is connected. If $0\le u<j$, then by~\eqref{eq:grid-adjacency},
$r_{u,v}\sim c_{u,v}\sim r_{u+1,v}$, so $r_{u,v}$ and $r_{u+1,v}$ are joined through $c_{u,v}$.

Similarly, if $0\le v<k$, then
$r_{u,v}\sim c_{u,v}\sim r_{u,v+1}$, so $r_{u,v}$ and $r_{u,v+1}$ are joined through $c_{u,v}$.

Starting from $r_{0,0}$ and repeatedly using these two types of connections, we can reach every row vertex $r_{u,v}$. Hence all row vertices lie in the same connected component.

For every column vertex $c_{x,y}$, the adjacency rule~\eqref{eq:grid-adjacency} gives $c_{x,y}\sim r_{x,y}$. Since all row vertices lie in the same connected component, so does every column vertex. Hence $\Gamma_{j,k}$ is connected.

Consider an edge $r_{u,v}c_{x,y}$. Suppose first that $x<j$ and
$v>0$. By~\eqref{eq:grid-adjacency}, $x\in\{u-1,u\}$ and
$y\in\{v-1,v\}$, or equivalently, $u\in\{x,x+1\}$ and
$v\in\{y,y+1\}$.
Let $u'$ be the other member of $\{x,x+1\}$ besides $u$, and let $y'$
be the other member of $\{v-1,v\}$ besides $y$. We claim that
\[
 r_{u,v},\ c_{x,y},\ r_{u',v},\ c_{x,y'},\ r_{u,v}
\]
is a four-cycle. The first edge is the original one. Taking $c_{x,y}$ as the column vertex,
the conditions $u'\in\{x,x+1\}$ and $v\in\{y,y+1\}$ hold by the
definition of $u'$ and the original adjacency, so
$c_{x,y}\sim r_{u',v}$. Next, $x\in\{u'-1,u'\}$ and
$y'\in\{v-1,v\}$, so $r_{u',v}\sim c_{x,y'}$. Finally,
$u\in\{x,x+1\}$ and $v\in\{y',y'+1\}$, and hence $c_{x,y'}\sim r_{u,v}$. Thus the displayed vertices form a
four-cycle containing the original edge.

If $y<k$ and $u>0$, interchange the two coordinates in this
construction. When both conditions fail, the adjacency rule
forces the edge to be one of~\eqref{eq:terminal-bridges}. The two
terminal vertices have degree one, so those edges are indeed bridges.
Finally, a cut with just one crossing edge in a connected graph
crosses a bridge. Here it must isolate one of the degree-one terminals,
and is therefore a terminal cut.
\end{proof}

\begin{lemma}\label{lem:mixed-path}
Throughout this lemma, the cuts under consideration separate the two terminals of the corresponding Kronecker product. If $j,k\ge1$, then
\begin{equation}\label{eq:mixed-path-bound}
 \Q(\PP_j,\PP_k)\ge0.
\end{equation}
Also, $\Q(\PP_j,\HH_k(d))\ge0$ if either $k=1,d\ge4$, or
$k\ge2,d\ge5$. The same statement holds with the factors interchanged.
\end{lemma}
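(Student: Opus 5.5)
The plan is to reduce each bound to counting crossing edges in the residual graph $\Gamma_{j,k}$ of Lemma~\ref{lem:product-path}, after removing negative weights by chord replacement. Terminal cuts are handled exactly by row sums.

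\emph{The bound $\Q(\PP_j,\PP_k)\ge0$.} By \eqref{eq:path-row-sums}, $h(\PP_j)h(\PP_k)=1$. The weights of $\PP_k\otimes\PP_j$ are $0$ or $1$, and the edges of weight $1$ are exactly the edges of $\Gamma_{j,k}$. Lemma~\ref{lem:product-path} shows that $\Gamma_{j,k}$ is connected, so any cut separating the two terminals crosses at least one edge. Hence the cut has weight at least $1$, which is \eqref{eq:mixed-path-bound}.

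\emph{Setting up $\Q(\PP_j,\HH_k(d))$.} Here $h(\PP_j)h(\HH_k(d))=d-1$, so I need every terminal-separating cut of $\HH_k(d)\otimes\PP_j$ to have weight at least $d-1$. I would use the path representation of both factors. A vertex of the product is a pair consisting of a vertex from the $\PP_j$-path and a vertex from the $\HH_k$-path, and an edge is a pair of factor edges.

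\begin{itemize}
\item A product of a short edge of $\PP_j$ with a short edge $g$ of $\HH_k(d)$ is an edge of $\Gamma_{j,k}$. By Lemma~\ref{lem:H-structure} it has weight $d$ or $d-1$, according as $g$ is type~S or type~I.
\item A product of a short edge $f=(\alpha,\alpha')$ of $\PP_j$ with a $3$-chord $e=(\beta_0,\beta_3)$ of weight $-1$ has weight $-1$.
\end{itemize}

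\emph{Chord replacement.} Let $\beta_0\beta_1\beta_2\beta_3$ be the path $P(e)$. Then
\[
(\alpha,\beta_0)-(\alpha',\beta_1)-(\alpha,\beta_2)-(\alpha',\beta_3)
\]
is a path of three short product edges of the form $f\times g$, with $g\in P(e)$, and it joins the endpoints of the product chord. So \eqref{eq:three-step} and \eqref{eq:negative-replacement} apply verbatim to each negative product chord. After replacement, each product edge $f\times g$ loses $1$ for every $3$-chord path containing $g$. By Lemma~\ref{lem:H-structure}, this leaves residual weight $d-1$ on type-S products and $d-3$ on type-I products, together with nonnegative correction terms. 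Thus every cut weight is at least the residual cut weight in $\Gamma_{j,k}$, whose edge weights are all at least $d-3$, or at least $d-1$ when $k=1$ since then there are no I-edges.

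\emph{The two kinds of cuts.} For a terminal cut, the cut weight is exactly the last row (or column) sum of the product, namely $h(\PP_j)h(\HH_k(d))=d-1$, so $\Q=0$. For a nonterminal cut, Lemma~\ref{lem:product-path} gives at least two crossing edges of $\Gamma_{j,k}$. The residual weight is therefore at least $2(d-3)\ge d-1$ when $k\ge2$ and $d\ge5$, and at least $2(d-1)\ge d-1$ when $k=1$; the latter covers $d\ge4$ with room to spare. The interchanged statement follows by the same argument with the roles of the two coordinates swapped, since $\Q$ and the Kronecker structure are symmetric under this relabeling.

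\emph{Main obstacle.} The delicate step is the chord replacement in the product. One must check that every negative product chord has its three-edge path inside $\Gamma_{j,k}$, and that each product short edge is charged exactly as many times as its $\HH_k$-factor edge lies in chord paths. I would verify this directly from the coordinates above, using the bipartite alternation of rows and columns.
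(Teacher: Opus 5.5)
Your proposal is correct and follows the paper's proof essentially step for step: connectivity of $\Gamma_{j,k}$ handles $\PP_k\otimes\PP_j$; each negative product chord $f\times e$ is replaced by $f\times P(e)$, which is exactly the paper's path $r_{u,v}-c_{x,v}-r_{u,v+1}-c_{x,v+1}$; this leaves residual weights $d-1$ and $d-3$; and Lemma~\ref{lem:product-path} gives two crossing edges for nonterminal cuts, while terminal cuts are evaluated exactly by row sums. Your observation that $f\times g$ lies in $f'\times P(e)$ exactly when $f'=f$ and $g\in P(e)$ already settles the charging count you flagged as the main obstacle, so the paper's explicit coordinate count can be bypassed. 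The only point to tighten is that product vertices are only the row--row and column--column pairs, not arbitrary pairs of path vertices.
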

\begin{proof}
We first consider $\PP_k\otimes\PP_j$.
The nonzero entries of $\PP_k\otimes\PP_j$ are all equal to $1$, and its corresponding bipartite graph is $\Gamma_{j,k}$. By
Lemma~\ref{lem:product-path}, $\Gamma_{j,k}$ is connected. Hence every
cut separating the two terminals crosses at least one edge, and therefore
$\cut_{\PP_k\otimes\PP_j}\ge1=h(\PP_j)h(\PP_k)$.
This proves~\eqref{eq:mixed-path-bound}.

We next consider $\HH_k(d)\otimes\PP_j$.
By Lemma~\ref{lem:H-structure}, $\HH_k(d)$ has the same short-edge positions as $\PP_k$, with modified short-edge weights, together with $3$-chords of weight $-1$. Hence the edges arising from the short-edge positions of $\HH_k(d)$ and $\PP_j$ still form the bipartite graph $\Gamma_{j,k}$, with modified weights. The remaining edges arise from the $3$-chord positions of $\HH_k(d)$, together with nonzero entries of the $\PP_j$-factor.

We next characterise the edges of $\HH_k(d)\otimes\PP_j$ whose $\HH_k(d)$-factor corresponds to a $3$-chord of weight $-1$. By Lemma~\ref{lem:H-structure}, in
$\HH_k(d)$, the $3$-chords of weight $-1$ correspond to matrix positions whose
row and column indices sum to $k+1$. In the coordinates of
Section~4.2, the $\HH_k(d)$-row index corresponding to $r_{u,v}$ is
$k-v$. Thus $(k-v)+y=k+1$ if and only if $y=v+1$. On the other hand,
the $\PP_j$-factor is nonzero precisely when $x\in\{u-1,u\}$.
Consequently, the edges arising from these $3$-chords, and only these
edges, have the form
\[
 r_{u,v}c_{x,v+1},
 \qquad
 x\in\{u-1,u\},
 \qquad
 0\le v\le k-1,
\]
with indices outside their ranges discarded.

Fix one of these edges. We claim that
\[
 r_{u,v}-c_{x,v}-r_{u,v+1}-c_{x,v+1}
\]
is a path joining the same two endpoints. Indeed, since $x\in\{u-1,u\}$, equation~\eqref{eq:grid-adjacency}
gives $r_{u,v}\sim c_{x,v}$. Also, $u\in\{x,x+1\}$ and
$v+1\in\{v,v+1\}$, so
$c_{x,v}\sim r_{u,v+1}$. Finally, $x\in\{u-1,u\}$ and
$v+1\in\{v,v+1\}$ give $r_{u,v+1}\sim c_{x,v+1}$. Thus this path has
the same endpoints as the edge $r_{u,v}c_{x,v+1}$. We call this the replacement path associated with
$r_{u,v}c_{x,v+1}$. Therefore~\eqref{eq:negative-replacement}, with
$w=1$, applies to this edge.

Apply this replacement to every such edge. In the sum of these identities,
the weight of each edge of $\Gamma_{j,k}$ is reduced by one for each
replacement path containing it. Thus it remains
only to determine how many replacement paths contain each edge of
$\Gamma_{j,k}$.

Fix $u$ and $x\in\{u-1,u\}$. For $0\le t\le k-1$, write
\[
 R_t:\qquad
 r_{u,t}-c_{x,t}-r_{u,t+1}-c_{x,t+1}.
\]
Its three edges are
\[
 r_{u,t}c_{x,t},
 \qquad
 r_{u,t+1}c_{x,t},
 \qquad
 r_{u,t+1}c_{x,t+1}.
\]

First consider
\[
 r_{u,v}c_{x,v-1},
 \qquad
 1\le v\le k.
\]
In the path representation of $\HH_k(d)$, row $k-v$ is placed at
position $2v$, while column $v-1$ is placed at position
$2(v-1)+1=2v-1$.
Thus the corresponding $\HH_k(d)$-short edge is
$(2v-1,2v)$, which is a type~$S$ edge by Lemma~\ref{lem:H-structure}.
Moreover, $r_{u,v}c_{x,v-1}$ occurs exactly as the middle edge of
$R_{v-1}$, and it cannot occur in any other $R_t$. Hence every edge of
$\Gamma_{j,k}$ of this form is used exactly once.

Next consider
\[
 r_{u,v}c_{x,v}.
\]
For the edge $r_{u,v}c_{x,v}$, the corresponding short edge in the path representation of $\HH_k(d)$ is $(2v,2v+1)$. By
Lemma~\ref{lem:H-structure}, this is a type~$S$ edge when $v=0$ or $v=k$,
and a type~$I$ edge when $1\le v\le k-1$.

If $v=0$, the edge $r_{u,0}c_{x,0}$ occurs only as the first edge of
$R_0$. If $v=k$, the edge $r_{u,k}c_{x,k}$ occurs only as the last edge
of $R_{k-1}$. Thus these two boundary type~$S$ edges are also used
exactly once. Finally, if $1\le v\le k-1$, then
$r_{u,v}c_{x,v}$ is the first edge of $R_v$ and the last edge of
$R_{v-1}$. It is not the middle edge of any replacement path.

Thus the above counting shows that every edge of $\Gamma_{j,k}$ corresponding to an $S$-edge in the path representation of $\HH_k(d)$ is used once, whereas every edge corresponding to an $I$-edge is used twice. Since the corresponding nonzero entry of $\PP_j$ is $1$, these edges have weights $d$ and $d-1$, respectively, before the replacements. Hence their resulting weights are $d-1$ and $d-3$, respectively, and all correction terms arising from
\eqref{eq:negative-replacement} are nonnegative.

Suppose first that $k\ge2$. By Lemma~\ref{lem:product-path}, every
nonterminal cut separating the two terminals crosses at least two
edges of $\Gamma_{j,k}$. Every remaining edge of $\Gamma_{j,k}$ has weight
at least $d-3$. Therefore, when $d\ge5$,
$\cut_{\HH_k(d)\otimes\PP_j}\ge2(d-3)\ge d-1$ for every nonterminal
cut. For either terminal cut, the cut sum is the last row or column sum of
$\HH_k(d)\otimes\PP_j$. By the row-sum property of the Kronecker product and~\eqref{eq:path-row-sums},
$\cut_{\HH_k(d)\otimes\PP_j}=h(\PP_j)h(\HH_k(d))=d-1$. Hence
$\Q(\PP_j,\HH_k(d))\ge0$.

If $k=1$, there are no type~$I$ short edges in $\HH_1(d)$. Thus every
remaining edge of $\Gamma_{j,1}$ has weight $d-1$. Since $\Gamma_{j,1}$ is
connected by Lemma~\ref{lem:product-path}, every cut separating the
terminals crosses at least one such edge. Therefore
$\cut_{\HH_1(d)\otimes\PP_j}\ge d-1=h(\PP_j)h(\HH_1(d))$, and hence $\Q(\PP_j,\HH_1(d))\ge0$.

Finally, interchanging the two coordinates merely relabels the same
weighted bipartite graph and preserves the two terminals. Therefore
the same conclusion holds with the factors interchanged.
\end{proof}

We next need a more detailed count when the path representations of both factors contain $3$-chords.
Delete the type~II edges from $\Gamma_{j,k}$ and denote the resulting
graph by $\Gamma'_{j,k}$. Call the SI- and IS-edges \emph{mixed edges}.

\begin{lemma}
In the spanning subgraph of $\Gamma_{j,k}$ consisting of the SS-edges,
the connected components are indexed by
$(\rho,\sigma)\in\{1,\ldots,j\}\times\{1,\ldots,k\}$.
We call these the SS-components. More precisely, define
\begin{equation}\label{eq:component-labels}
 \lambda(r_{u,v})=(\max(1,u),\max(1,v)),\qquad
 \lambda(c_{x,y})=(\min(j,x+1),\min(k,y+1)).
\end{equation}
Two vertices belong to the same SS-component if and only if they have
the same label.
\end{lemma}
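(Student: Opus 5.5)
We first identify the SS-edges of $\Gamma_{j,k}$ concretely, and then check the two directions of the statement separately.

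\textbf{Step 1: the SS-edges.} By Lemma~\ref{lem:H-structure}, in the path of $\PP_j$ the short edges are of two kinds. The edges $(2u+1,2u+2)$ correspond to entries with row $+$ column $=j-1$. The edges $(2u,2u+1)$ correspond to row $+$ column $=j$, and these are of type~I exactly when $1\le u\le j-1$.

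In the coordinates of $\Gamma_{j,k}$, an edge $r_{u,v}c_{x,y}$ in the first coordinate has $x\in\{u-1,u\}$.
\begin{itemize}
\item If $x=u-1$, the entry $(j-u,u-1)$ has index sum $j-1$, so this short edge is always of type~S.
\item If $x=u$, the index sum is $j$ and the short edge is $(2u,2u+1)$. It is of type~S iff $u\in\{0,j\}$.
\end{itemize}
Hence the first-coordinate factor is of type~S iff $x=u-1$, or $x=u\in\{0,j\}$. The second coordinate behaves the same way with $(v,y,k)$ in place of $(u,x,j)$. An edge is an SS-edge iff both coordinates satisfy this condition.

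\textbf{Step 2: edges preserve labels.} Since the conditions factor over the two coordinates, it suffices to check one coordinate: each allowed pair $(u,x)$ satisfies $\max(1,u)=\min(j,x+1)$.
\begin{itemize}
\item If $x=u-1$ with $u\ge1$, both sides equal $u$, because $u\le j$.
\item If $x=u=0$, both sides equal $1$, using $j\ge1$.
\item If $x=u=j$, both sides equal $j$.
\end{itemize}
So every SS-edge joins vertices with the same label, and each SS-component lies inside a single label class.

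\textbf{Step 3: each label class is connected.} Fix $(\rho,\sigma)$ and describe its members one coordinate at a time.
\begin{itemize}
\item Row vertices have first coordinate $u=\rho$, together with $u=0$ when $\rho=1$.
\item Column vertices have first coordinate $x=\rho-1$, together with $x=j$ when $\rho=j$.
\end{itemize}
The second coordinates are described analogously with $\sigma$ and $k$.

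Within one coordinate, the allowed pairs form a connected bipartite structure. The core vertex $u=\rho$ is joined to $x=\rho-1$. The extra vertex $u=0$ (when $\rho=1$) is joined to $x=0=\rho-1$. The extra vertex $x=j$ (when $\rho=j$) is joined to $u=j=\rho$.

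The SS-adjacency on the label class is the product of the two one-coordinate relations, in the sense that $r_{u,v}\sim c_{x,y}$ iff both coordinate pairs are allowed. This is the bipartite Kronecker product of two connected bipartite graphs, and each factor is a star or a short path (at most a path on $4$ vertices). Such a product restricted to the row/column parity class is connected.

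Concretely, connect every vertex to the core pair $r_{\rho,\sigma}$, $c_{\rho-1,\sigma-1}$.
\begin{itemize}
\item For a row vertex $r_{u,v}$, choose the core column in each coordinate, namely $\rho-1$ and $\sigma-1$. Each extra row value is adjacent to the core column, so $r_{u,v}\sim c_{\rho-1,\sigma-1}$ directly.
\item For a column vertex $c_{x,y}$, choose row coordinates adjacent to $x$ and $y$ respectively. These can always be taken to be the core values $\rho$ and $\sigma$, since the core row value is adjacent to both column values in its coordinate. Hence $c_{x,y}\sim r_{\rho,\sigma}$, and $r_{\rho,\sigma}\sim c_{\rho-1,\sigma-1}$.
\end{itemize}
Thus each label class is connected, and nonempty since it contains $r_{\rho,\sigma}$.

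\textbf{Conclusion.} Every label in $\{1,\ldots,j\}\times\{1,\ldots,k\}$ is attained, the label classes are exactly the SS-components, and the lemma follows.

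The main obstacle is the case bookkeeping at the boundary values $\rho\in\{1,j\}$ and $\sigma\in\{1,k\}$. The case $j=1$ needs separate care, because then both extra vertices occur in the same coordinate. Checking the direct adjacencies to the core pair handles all of these cases uniformly.
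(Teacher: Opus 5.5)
Your proof is correct and is essentially the paper's argument. You characterise the type~S pairs coordinatewise, check $\max(1,u)=\min(j,x+1)$ on each, and then show each label class is connected by joining every row vertex to $c_{\rho-1,\sigma-1}$ and every column vertex to $r_{\rho,\sigma}$, with the core edge $r_{\rho,\sigma}c_{\rho-1,\sigma-1}$ linking the two (your general remark about bipartite Kronecker products is not needed once these direct adjacencies are given).
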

\begin{proof}
For an edge $r_{u,v}c_{x,y}$, the corresponding short edge in $\PP_j$
is of type~S precisely when either
$x=u-1$, or
$x=u$ and $u\in\{0,j\}$. In either case,
$\max(1,u)=\min(j,x+1)$.

Similarly, the corresponding short edge in $\PP_k$ is of type~S
precisely when either
$y=v-1$, or
$y=v$ and $v\in\{0,k\}$. In either case,
$\max(1,v)=\min(k,y+1)$.
Therefore every SS-edge joins two vertices with the same label
in~\eqref{eq:component-labels}.

Conversely, fix
$(\rho,\sigma)\in\{1,\ldots,j\}\times\{1,\ldots,k\}$.
The row vertices with label $(\rho,\sigma)$ are exactly the vertices
$r_{u,v}$ for which
\[
 u=\begin{cases}\rho,&\rho>1,\\0\text{ or }1,&\rho=1,\end{cases}
 \qquad
 v=\begin{cases}\sigma,&\sigma>1,\\0\text{ or }1,&\sigma=1.\end{cases}
\]
Every such row vertex is joined by an SS-edge to
$c_{\rho-1,\sigma-1}$.

Likewise, the column vertices with label $(\rho,\sigma)$ are exactly
the vertices $c_{x,y}$ for which
\[
 x=\begin{cases}\rho-1,&\rho<j,\\j-1\text{ or }j,&\rho=j,\end{cases}
 \qquad
 y=\begin{cases}\sigma-1,&\sigma<k,\\k-1\text{ or }k,&\sigma=k.\end{cases}
\]
Every such column vertex is joined by an SS-edge to $r_{\rho,\sigma}$.

Since $r_{\rho,\sigma}c_{\rho-1,\sigma-1}$ is itself an SS-edge,
all vertices with label $(\rho,\sigma)$ lie in the same SS-component.
Together with the preceding paragraph, this shows that the SS-components
are exactly the label classes in~\eqref{eq:component-labels}.
\end{proof}

\begin{lemma}\label{lem:contracted-grid}
After contracting the SS-components of $\Gamma'_{j,k}$, the mixed edges
form a rectangular grid on
$\{1,\ldots,j\}\times\{1,\ldots,k\}$, possibly with parallel edges.
For $1\le \rho\le j$ and $1\le \sigma\le k-1$, the number of SI-edges joining
the SS-components labelled $(\rho,\sigma)$ and $(\rho,\sigma+1)$ is
\begin{equation}\label{eq:multiplicity}
 \mu_j(\rho)=
 \begin{cases}
 3,&j=1,\\
 2,&j\ge2,\ \rho\in\{1,j\},\\
 1,&1<\rho<j.
 \end{cases}
\end{equation}
Similarly, for $1\le \rho\le j-1$ and $1\le \sigma\le k$, the number of
IS-edges joining $(\rho,\sigma)$ and $(\rho+1,\sigma)$ is $\mu_k(\sigma)$.
In particular, $\Gamma'_{j,k}$ is connected.
\end{lemma}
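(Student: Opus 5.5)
We prove Lemma~\ref{lem:contracted-grid} by computing, for each mixed edge, the labels of its two endpoints under~\eqref{eq:component-labels}, and then counting. The two factors play symmetric roles, so it suffices to treat SI-edges; the IS-statement follows by interchanging coordinates, with $\mu_k(\sigma)$ in place of $\mu_j(\rho)$.

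\textbf{Label shift of an SI-edge.} Take an SI-edge $r_{u,v}c_{x,y}$. The $\PP_j$-factor is of type~S, and the proof of the preceding lemma shows this forces $\max(1,u)=\min(j,x+1)=:\rho$. The $\PP_k$-factor is of type~I. By Lemma~\ref{lem:H-structure}, with row $k-v$ at position $2v$ and column $y$ at position $2y+1$, this means $y=v$ with $1\le v\le k-1$. Then
\[
\max(1,v)=v,\qquad \min(k,y+1)=v+1 .
\]
So the row endpoint has label $(\rho,v)$ and the column endpoint has label $(\rho,v+1)$. Every SI-edge therefore joins vertically adjacent grid cells $(\rho,\sigma)$ and $(\rho,\sigma+1)$, where $\sigma=v$ ranges over $1,\ldots,k-1$. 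Symmetrically, every IS-edge joins $(\rho,\sigma)$ and $(\rho+1,\sigma)$. This gives the grid structure.

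\textbf{Counting multiplicities.} Fix $\rho$ and $\sigma=v$. The SI-edges in question are exactly the pairs $(u,x)$ with $x\in\{u-1,u\}$ for which the $\PP_j$-short edge is of type~S and $\max(1,u)=\rho$; the $v$-coordinate is already fixed. Recall that type~S means $x=u-1$ (for $1\le u\le j$), or $x=u$ with $u\in\{0,j\}$. Enumerating:
\begin{itemize}
\item For $1<\rho<j$: only $u=\rho$, $x=\rho-1$ qualifies, giving multiplicity~$1$.
\item For $\rho=1<j$: the candidates are $u=1,x=0$ and $u=0,x=0$ (the latter since $u=0$ is a boundary index), giving~$2$.
\item For $\rho=j>1$: the candidates are $u=j,x=j-1$ and $u=j,x=j$, giving~$2$.
\item For $j=1$: the candidates are $(u,x)=(0,0)$, $(1,0)$ and $(1,1)$, giving~$3$.
\end{itemize}
This matches~\eqref{eq:multiplicity}. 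The main care needed is exactly here: the boundary cases $u\in\{0,j\}$ and the collapse of labels under $\max(1,\cdot)$ and $\min(j,\cdot)$. For this reason I would present the enumeration explicitly.

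\textbf{Connectivity.} Every multiplicity $\mu$ is positive, so the contracted multigraph contains the full $j\times k$ grid, which is connected. Each SS-component is connected by the preceding lemma. Hence $\Gamma'_{j,k}$ is connected.
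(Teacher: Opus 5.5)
Your proposal is correct and takes essentially the same route as the paper. You identify the type-I condition in the $\PP_k$-factor as $y=v$ with $1\le v\le k-1$, read off the endpoint labels $(\rho,v)$ and $(\rho,v+1)$, enumerate the admissible pairs $(u,x)$ (including the boundary cases $u\in\{0,j\}$ and the three pairs when $j=1$), and deduce connectivity from the positive multiplicities together with the connectedness of the SS-components, just as the paper does. The only cosmetic difference is ordering: you establish the label shift for every mixed edge before counting, whereas the paper first fixes the target pair of labels and states the converse adjacency at the end.
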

\begin{proof}
Fix $1\le \rho\le j$ and $1\le \sigma\le k-1$, and consider an SI-edge
$r_{u,v}c_{x,y}$ joining the SS-components labelled $(\rho,\sigma)$ and
$(\rho,\sigma+1)$. By \eqref{eq:grid-adjacency} and
Lemma~\ref{lem:H-structure}, the short edge corresponding to the
$\PP_k$-entry is of type~I precisely when $y=v=t$ for some
$t\in\{1,\ldots,k-1\}$. By \eqref{eq:component-labels}, the second coordinates of the endpoint labels are then $t$ and $t+1$, respectively. Hence
$t=\sigma$.
Thus the corresponding I-edge in $\PP_k$ is fixed, and it remains only
to count the possible pairs $(u,x)$ for which the corresponding short
edge in $\PP_j$ is of type~S and both endpoint labels have first coordinate $\rho$.

If $j\ge2$, these pairs are $(\rho,\rho-1)$, together with $(0,0)$
when $\rho=1$ and $(j,j)$ when $\rho=j$. Hence
$\mu_j(\rho)=2$ for $\rho\in\{1,j\}$ and $\mu_j(\rho)=1$ for
$1<\rho<j$. If $j=1$, the three possible pairs are $(0,0)$, $(1,0)$,
and $(1,1)$, for all of which the endpoint labels have first coordinate $1$, so $\mu_1(1)=3$.
This proves \eqref{eq:multiplicity}.

The IS-edges are counted in the same way after interchanging the roles
of $j$ and $k$. Hence the number joining $(\rho,\sigma)$ and
$(\rho+1,\sigma)$ is $\mu_k(\sigma)$.

Thus every pair of horizontally or vertically adjacent labels is joined
by at least one mixed edge. Conversely, by~\eqref{eq:grid-adjacency}
and~\eqref{eq:component-labels}, every mixed edge joins two labels that agree in
one coordinate and differ by one in the other. After contracting the
SS-components, the underlying simple graph is therefore the rectangular
grid on $\{1,\ldots,j\}\times\{1,\ldots,k\}$, and hence is connected. Since
each contracted vertex represents a connected SS-component, it follows
that $\Gamma'_{j,k}$ itself is connected.
\end{proof}

\begin{lemma}\label{lem:SS-bridges}
Among the type~SS edges of $\Gamma'_{j,k}$, the only bridges are the
two terminal edges
\begin{equation*}
r_{0,0}c_{0,0},\qquad r_{j,k}c_{j,k}.
\end{equation*}
\end{lemma}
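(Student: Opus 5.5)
The plan is to mirror the four-cycle argument of Lemma~\ref{lem:product-path}, now inside $\Gamma'_{j,k}$. For each nonterminal SS-edge we exhibit a cycle in $\Gamma'_{j,k}$ through it. The two terminal edges are bridges because $r_{0,0}$ and $c_{j,k}$ have degree one already in $\Gamma_{j,k}$, and deleting edges cannot increase degrees. Both terminal edges are of type SS: $(u,x)=(0,0)$ and $(j,j)$ are boundary type~S positions, and similarly in the second coordinate.

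Consider a nonterminal SS-edge $r_{u,v}c_{x,y}$, so $x\in\{u-1,u\}$ and $y\in\{v-1,v\}$, both of type~S.

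\emph{First attempt.} Suppose $x<j$ and $v>0$. The proof of Lemma~\ref{lem:product-path} gives the four-cycle
\[
r_{u,v},\ c_{x,y},\ r_{u',v},\ c_{x,y'},\ r_{u,v}.
\]
We check that its edges avoid type~II. An edge lies in $\Gamma'_{j,k}$ as long as at least one of its two factors is of type~S. Its first-coordinate pairs are $(u,x)$ and $(u',x)$, and its second-coordinate pairs are $(v,y)$ and $(v,y')$. The type~S pairs in the first coordinate are $x=u-1$ together with the boundary pairs $(0,0)$ and $(j,j)$. In the second coordinate they are $y=v-1$ together with $(0,0)$ and $(k,k)$.

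Since $\{u,u'\}=\{x,x+1\}$, one of the two first-coordinate pairs is $(x+1,x)$, which is type~S. Likewise $\{y,y'\}=\{v-1,v\}$, so one of the second-coordinate pairs is $(v,v-1)$, also type~S. The four edges use the pair combinations $(u,y)$, $(u',y)$, $(u',y')$ and $(u,y')$. So the only edge that could be type~II is the one using the non-S first pair together with the non-S second pair.

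\emph{Fallback via the contracted grid.} If this four-cycle fails, switch to Lemma~\ref{lem:contracted-grid}. Every SS-edge lies inside a single SS-component, and by Lemma~\ref{lem:contracted-grid} the mixed edges connect those components into a connected grid. So it suffices to show that each nonterminal SS-edge $e$ lies on a cycle \emph{inside its own SS-component}. Equivalently, the SS-component minus $e$ is still connected.

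For this we describe each SS-component from the label lemma as a product. The first-coordinate S-edges carrying label $\rho$ form a small tree:
\begin{itemize}
\item a single edge $r_\rho$--$c_{\rho-1}$ for $1<\rho<j$;
\item a path $r_0$--$c_0$--$r_1$ when $\rho=1$, with the analogous path at $\rho=j$;
\item a path of three edges when $j=1$.
\end{itemize}
The SS-component with label $(\rho,\sigma)$ is then the bipartite Kronecker product of two such trees. In a product of two paths, an edge whose endpoints both have degree at least $2$ in their factors lies on a four-cycle, by the argument of Lemma~\ref{lem:product-path}. The edges that fail this are those incident to a degree-one factor vertex in both coordinates simultaneously.

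\emph{Main obstacle.} The hard step is this case analysis: showing that the only SS-edges which are pendant in both coordinates are the terminal edges $r_{0,0}c_{0,0}$ and $r_{j,k}c_{j,k}$. This needs care at the corners, namely $\rho\in\{1,j\}$ and $\sigma\in\{1,k\}$, and in the degenerate cases $j=1$ or $k=1$. There the factor paths have three edges, and their interior edges again produce four-cycles. For any remaining boundary edge that is pendant in only one coordinate, I would instead use an alternate route through the mixed edges from Lemma~\ref{lem:contracted-grid}, joining to the neighbouring SS-component and back.
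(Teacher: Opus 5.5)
\textbf{There is a genuine gap, located exactly at the edges that neither of your mechanisms covers.} If you finish the check set up in your first attempt, the four-cycle $r_{u,v},c_{x,y},r_{u',v},c_{x,y'}$ contains a type~II edge precisely for the core edges $r_{u,v}c_{u-1,v-1}$ with $u\ge2$ and $v\le k-1$. The offending edge is $r_{u-1,v}c_{u-1,v}$. The interchanged four-cycle fails, through $r_{u,v-1}c_{u,v-1}$, precisely when $u\le j-1$ and $v\ge2$.

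So for $2\le u\le j-1$ and $2\le v\le k-1$, which occurs once $j,k\ge3$, both inherited four-cycles leave $\Gamma'_{j,k}$. These are in fact the only four-cycles of $\Gamma_{j,k}$ through that edge: each factor projection of a four-cycle is a degenerate closed walk in a path. Hence such an edge lies on no four-cycle of $\Gamma'_{j,k}$ at all.

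Your fallback does not rescue these edges. By your own description, the SS-component labelled $(\rho,\sigma)$ with $1<\rho<j$, $1<\sigma<k$ is the product of two single edges, i.e.\ the single edge $r_{\rho,\sigma}c_{\rho-1,\sigma-1}$. That edge is pendant in both coordinates, so the claim you flag as the main obstacle (that only the terminal edges are pendant in both coordinates) is false. Your last resort is explicitly limited to boundary edges pendant in one coordinate, so these interior core edges fall through every case. The within-component criterion is in fact rarely available: for $j,k\ge2$, the components $(1,1)$ and $(j,k)$ are stars $K_{1,4}$, the other boundary components are two-edge paths, and only $(1,k)$ and $(j,1)$ contain a cycle ($K_{2,2}$).

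\textbf{The missing idea} is a cycle that leaves the SS-component through mixed edges and returns through a neighbouring component, i.e.\ one projecting onto a unit square of the contracted grid of Lemma~\ref{lem:contracted-grid}. For the core edge with $u\ge2$, $v\le k-1$, the paper uses the six-cycle
\[
 r_{u,v},\ c_{u-1,v},\ r_{u-1,v+1},\ c_{u-2,v},\ r_{u-1,v},\ c_{u-1,v-1},\ r_{u,v},
\]
whose edge types are SI, IS, SS, SI, IS, SS, so it avoids type~II. Its mirror image handles $u\le j-1$, $v\ge2$.

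An equivalent abstract route also works. After deleting the interior core edge $e=r_{\rho,\sigma}c_{\rho-1,\sigma-1}$, its endpoints still have mixed edges to the distinct neighbouring components $(\rho,\sigma+1)$ and $(\rho-1,\sigma)$. The grid with the vertex $(\rho,\sigma)$ removed remains connected, so the endpoints of $e$ stay connected in $\Gamma'_{j,k}-e$.

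With this for the core edges, your four-cycles in both orientations for the remaining SS-edges, and the observation that for $j=1$ or $k=1$ there are no type~II edges (so Lemma~\ref{lem:product-path} applies directly), the argument closes.
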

\begin{proof}
If $j=1$, then $\mathsf P_j$ has no type~I edges, while if $k=1$, then
$\mathsf P_k$ has no type~I edges. Hence, if $j=1$ or $k=1$, there are no
type~II edges in $\Gamma_{j,k}$, and therefore
$\Gamma'_{j,k}=\Gamma_{j,k}$. The result then follows from
Lemma~\ref{lem:product-path}. Thus we may assume $j,k\ge2$.

For an edge $r_{u,v}c_{x,y}$ of $\Gamma_{j,k}$, the corresponding short
edge in $\mathsf P_j$ is of type~S precisely when $x=u-1$, or when
$x=u$ with $u\in\{0,j\}$; similarly, the corresponding short edge in
$\mathsf P_k$ is of type~S precisely when $y=v-1$, or when $y=v$ with
$v\in\{0,k\}$. We call the type~S edges arising from $x=u$ with
$u\in\{0,j\}$ in $\mathsf P_j$, and from $y=v$ with $v\in\{0,k\}$ in
$\mathsf P_k$, the \emph{boundary S edges}.

For $1\le u\le j$ and $1\le v\le k$, we call
$r_{u,v}c_{u-1,v-1}$ the \emph{core SS-edge} of the component labelled $(u,v)$. All other
type~SS edges will be referred to as \emph{noncore SS-edges}.

We first consider the core SS-edges. If $u\ge2$ and
$v\le k-1$, this edge lies on the six-cycle
\begin{equation*}
 r_{u,v},\ c_{u-1,v},\ r_{u-1,v+1},\ c_{u-2,v},\
 r_{u-1,v},\ c_{u-1,v-1},\ r_{u,v}.
\end{equation*}
Each consecutive pair satisfies~\eqref{eq:grid-adjacency}; the six edge
types are, in order, $SI,IS,SS,SI,IS,SS$.
Hence the entire cycle lies in $\Gamma'_{j,k}$. If instead
$u\le j-1$ and $v\ge2$, the core edge lies on
\begin{equation*}
 r_{u,v},\ c_{u,v-1},\ r_{u+1,v-1},\ c_{u,v-2},\
 r_{u,v-1},\ c_{u-1,v-1},\ r_{u,v},
\end{equation*}
whose edge types are $IS,SI,SS,IS,SI,SS$.
Again the whole cycle is contained in $\Gamma'_{j,k}$. These two
cases cover all core labels except $(1,1)$ and $(j,k)$. The two
remaining core edges lie, respectively, on the four-cycles
\begin{equation*}
\begin{aligned}
 &r_{1,1},\ c_{0,0},\ r_{0,1},\ c_{0,1},\ r_{1,1},\\
 &r_{j,k},\ c_{j-1,k-1},\ r_{j,k-1},\ c_{j,k-1},\ r_{j,k}.
\end{aligned}
\end{equation*}
Every consecutive pair in these cycles satisfies
\eqref{eq:grid-adjacency}, and every edge has at most one type~I
coordinate. Thus both cycles lie in $\Gamma'_{j,k}$. Consequently,
no core SS-edge is a bridge.

It remains to consider the noncore SS-edges. By the preceding classification
of type~S edges in $\mathsf P_j$ and $\mathsf P_k$, every noncore type~SS
edge uses a boundary type~S edge in at least one of the two factors. Hence
the noncore type~SS edges consist of the four families
\begin{equation*}
\begin{aligned}
 &r_{0,v}c_{0,v-1},
 &&1\le v\le k,\\
 &r_{j,v}c_{j,v-1},
 &&1\le v\le k,\\
 &r_{u,0}c_{u-1,0},
 &&1\le u\le j,\\
 &r_{u,k}c_{u-1,k},
 &&1\le u\le j,
\end{aligned}
\end{equation*}
together with the four corner edges $r_{0,0}c_{0,0}$,
$r_{0,k}c_{0,k}$, $r_{j,0}c_{j,0}$, and $r_{j,k}c_{j,k}$.

Each edge in the first family lies on the four-cycle
\[
 r_{0,v}-c_{0,v-1}-r_{1,v}-c_{0,v}-r_{0,v}.
\]
By \eqref{eq:grid-adjacency}, all four adjacencies hold. Moreover, the
short edge in $\mathsf P_j$ corresponding to each edge of this cycle is
of type~S, so none of its edges is of type~II. Hence the cycle lies in
$\Gamma'_{j,k}$.

The other three families are treated in the same way, using the four-cycles
\begin{equation*}
\begin{aligned}
 &r_{j,v}-c_{j,v-1}-r_{j,v-1}-c_{j-1,v-1}-r_{j,v},
 &&1\le v\le k,\\
 &r_{u,0}-c_{u-1,0}-r_{u,1}-c_{u,0}-r_{u,0},
 &&1\le u\le j,\\
 &r_{u,k}-c_{u-1,k}-r_{u-1,k}-c_{u-1,k-1}-r_{u,k},
 &&1\le u\le j.
\end{aligned}
\end{equation*}
Thus every edge in the four families lies on a cycle in $\Gamma'_{j,k}$.

These cycles also contain the two nonterminal corner edges
$r_{0,k}c_{0,k}$ and $r_{j,0}c_{j,0}$. Therefore the only noncore
type~SS edges not yet shown to lie on a cycle are $r_{0,0}c_{0,0}$ and
$r_{j,k}c_{j,k}$.
By \eqref{eq:grid-adjacency}, $r_{0,0}$ and $c_{j,k}$ both have degree
one. Hence these two edges are bridges, completing the proof.
\end{proof}

\subsection{Two factors with negative 3-chords}

The following is the estimate that will be used in the transform
decomposition.

\begin{theorem}\label{thm:path-blocks}
Let each of $A,D$ be one of the matrices
\begin{equation*}
 \PP_j\ (j\ge1),\qquad
 \HH_1(c)\ (c\ge4),\qquad
 \HH_j(c)\ (j\ge2,\ c\ge5).
\end{equation*}
Every cut separating the terminals of $D\otimes A$ satisfies
\begin{equation}\label{eq:path-block-bound}
 \cut_{D\otimes A}\ge h(A)h(D).
\end{equation}
If both factors are H matrices, equality holds only for the two
terminal cuts. Furthermore, if $j,k\ge2$ and $c,d\ge5$, every
nonterminal cut separating the terminals satisfies
\begin{equation}\label{eq:path-block-gap}
 \cut_{\HH_k(d)\otimes\HH_j(c)}-(c-1)(d-1)\ge5.
\end{equation}
\end{theorem}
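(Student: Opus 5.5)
The cases with at least one $\PP$ factor are already settled: \eqref{eq:mixed-path-bound} covers $\PP\otimes\PP$, and Lemma~\ref{lem:mixed-path} covers one $\PP$ factor together with one admissible $\HH$ factor, since $h(\PP)=1$. So the remaining case is $D\otimes A=\HH_k(d)\otimes\HH_j(c)$. Here $h(A)h(D)=(c-1)(d-1)$, and the two terminal cuts attain it because the last row and column sums of the product are $(c-1)(d-1)$. The plan is to run the chord-replacement argument of Lemma~\ref{lem:mixed-path} in both coordinates at once and then apply Lemmas~\ref{lem:contracted-grid} and~\ref{lem:SS-bridges} to the residual weighted graph on $\Gamma_{j,k}$.

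\textbf{Step 1: classify the edges.} By Lemma~\ref{lem:H-structure}, an entry of $\HH_k(d)\otimes\HH_j(c)$ is a product of a short-edge weight or chord weight in each factor. This gives four kinds of edges:
\begin{itemize}
\item short$\times$short edges, which form $\Gamma_{j,k}$ with weights $c^\ast d^\ast$, where $c^\ast\in\{c,c-1\}$ and $d^\ast\in\{d,d-1\}$;
\item short$\times$chord edges, of weight $-c^\ast$;
\item chord$\times$short edges, of weight $-d^\ast$;
\item chord$\times$chord edges, of weight $+1$.
\end{itemize}
The chord$\times$chord edges are positive and can be dropped for a lower bound.

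\textbf{Step 2: replace the negative edges.} Each short$\times$chord edge $r_{u,v}c_{x,v+1}$ is replaced, via \eqref{eq:negative-replacement} with $w=c^\ast$, by the path $R_v$ used in Lemma~\ref{lem:mixed-path}. The chord$\times$short edges are replaced symmetrically. The counting in Lemma~\ref{lem:mixed-path} then gives the residual weights on $\Gamma_{j,k}$. An edge of type $AB$ starts with weight $c_A d_B$, where $c_S=c$ and $c_I=c-1$. It loses $m_B c_A$ from the second-factor chords, where $m_S=1$ and $m_I=2$, and $m_A d_B$ from the first-factor chords. The resulting weights are:
\begin{itemize}
\item $SS$: $cd-c-d=(c-1)(d-1)-1$;
\item $SI$: $c(d-1)-2c-(d-1)=(c-1)(d-3)-2$, and symmetrically for $IS$;
\item $II$: $(c-1)(d-1)-2(c-1)-2(d-1)$.
\end{itemize}
All correction terms are nonnegative. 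For $c,d\ge5$ every residual weight is positive; the smallest is $II$, which is at least $16-16=0$. I would double-check this arithmetic, especially the $II$ case.

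\textbf{Step 3: bound nonterminal cuts.} Drop the $II$ edges, leaving $\Gamma'_{j,k}$. A nonterminal cut separating the terminals either crosses at least two $SS$ edges, or separates two $SS$-components. The first alternative holds because, by Lemma~\ref{lem:SS-bridges}, a single crossed $SS$ edge that is not a terminal edge lies on a cycle. In the second case, Lemma~\ref{lem:contracted-grid} shows it crosses mixed edges of total multiplicity at least $2$. The multiplicities $\mu$ are at least $2$ at the boundary, and interior grid cuts cross at least two grid edges unless they separate a corner component, which has multiplicity-$2$ edges. Each mixed edge has residual weight at least $(c-1)(d-3)-2$.

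\textbf{Main obstacle.} The hardest step is the case analysis in Step 3, especially cuts that cross exactly one $SS$ edge together with some mixed edges. I would first check the minimum combinations:
\[
2\bigl((c-1)(d-1)-1\bigr)-(c-1)(d-1)=(c-1)(d-1)-2\ge14\ge5,
\]
and
\[
2\bigl((c-1)(d-3)-2\bigr)-(c-1)(d-1)=(c-1)(d-5)-4.
\]
The second can be negative at $d=5$. So the mixed-edge estimate must be sharpened, either by using the multiplicity $\mu\ge2$ on each side, giving four mixed edges for a grid separation, or by keeping the correction terms: a cut crossing only mixed edges forces sign changes that make certain replacement paths cross twice, contributing positive corrections. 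I would try first to show that any grid separation crosses at least three mixed edges counted with multiplicity, which gives $3\bigl((c-1)(d-3)-2\bigr)-(c-1)(d-1)\ge5$ for $c,d\ge5$.

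\textbf{The cases $j=1$ or $k=1$.} Here there are no $I$ edges in the length-$1$ factor. The residual weights become at least $(c-1)(d-1)-1$ on $SS$ and $(c-1)(d-3)-2$ on $SI$. With $\mu_1=3$, a grid separation costs at least three mixed edges, and Lemma~\ref{lem:product-path} yields \eqref{eq:path-block-bound}. Equality analysis then follows because every nonterminal cut is strictly larger, so equality occurs only for the two terminal cuts.
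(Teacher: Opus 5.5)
Your architecture matches the paper's: chord replacement in both factors, residual weights $\alpha\beta-m_1\beta-m_2\alpha$ on $\Gamma_{j,k}$, discarding the II-edges, then SS-components and the contracted grid. Treating general $c,d$ directly, instead of the paper's minimal parameters plus the bilinear expansion \eqref{eq:parameter-expansion}, would also be fine. The gap is in Step~3, where the counts you propose are too weak.

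For $j,k\ge2$ and a cut crossing no SS-edge, three mixed edges do not yield \eqref{eq:path-block-gap}. Your expression $3\bigl((c-1)(d-3)-2\bigr)-(c-1)(d-1)=(c-1)(2d-8)-6$ equals $2$ at $c=d=5$ (three edges of weight $6$ against $16$). The missing idea is that at least \emph{four} mixed edges cross. The boundary paths $(1,1),\dots,(j,1),\dots,(j,k)$ and $(1,1),\dots,(1,k),\dots,(j,k)$ of the contracted grid are edge-disjoint, and each contains a sign change. By \eqref{eq:multiplicity}, every grid edge on them has multiplicity $2$. Each mixed edge is only bounded below by $\min\{(c-1)(d-3),(c-3)(d-1)\}-2$, since IS-edges have weight $(c-3)(d-1)-2$; four of them still give a gap of at least $8$.

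Your dichotomy is also not valid as stated. A non-terminal SS-edge lying on a cycle only forces a second crossed edge of $\Gamma'_{j,k}$, and that edge may be mixed. The correct split has three cases:
\begin{itemize}
\item two crossed SS-edges;
\item one SS-edge plus a mixed edge, giving a gap of $\min(\mathrm{SI},\mathrm{IS})-1\ge5$;
\item exactly one crossed edge of the connected graph $\Gamma'_{j,k}$, which is then a bridge and hence terminal by Lemma~\ref{lem:SS-bridges}.
\end{itemize}

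The equality assertion is not established when $j=1$ or $k=1$. At $j=1$, $c=4$, $d=5$ the SI residual weight is $4\cdot4-4-8=4$. Consider the nonterminal cut placing the components $(1,1),\dots,(1,h)$ on one side and $(1,h+1),\dots,(1,k)$ on the other. It crosses exactly three mixed edges of $R$, with total $12=h(\HH_1(4))h(\HH_k(5))$. Having discarded the chord$\times$chord edges and the correction terms, you only get $\ge12$, so ``every nonterminal cut is strictly larger'' does not follow.

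The paper restores strictness with one of the positive edges you dropped. The edge $r_{0,h-1}c_{1,h}$ is a chord position in both factors, and its endpoints lie in the components $(1,\max\{1,h-1\})$ and $(1,h+1)$. So it crosses the cut and contributes $1$. In your general-parameter version the three-edge count has slack $2(c-1)(d-1)-6c$. This vanishes only at $(c,d)=(4,5)$ (and at $(5,4)$ when $k=1$), but that point lies inside the range covered by the theorem.
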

\begin{proof}
The cases $\PP_j\otimes\PP_k$, $\PP_j\otimes\HH_k(d)$, and
$\HH_k(d)\otimes\PP_j$ are covered by Lemma~\ref{lem:mixed-path}.
We therefore consider $\HH_k(d)\otimes\HH_j(c)$.

We use the same row and column indexing as in Section~4.2. By
Lemma~\ref{lem:H-structure}, $\HH_j(c)$ and $\HH_k(d)$ have the same
short-edge positions as $\PP_j$ and $\PP_k$, respectively. Hence the
edges corresponding to the short-edge positions in both factors still
form the bipartite graph $\Gamma_{j,k}$, with modified weights.

Now consider an edge for which a $3$-chord position occurs in exactly
one of the two factors. Suppose first that the $3$-chord position lies
in $\HH_k(d)$, while the position in $\HH_j(c)$ is a short-edge
position. Since the short-edge positions of $\HH_j(c)$ are exactly the
nonzero positions of $\PP_j$, these negative edges and their replacement
paths occur in exactly the same positions as those already obtained for
$\HH_k(d)\otimes\PP_j$ in Lemma~\ref{lem:mixed-path}.

More precisely, such a negative edge has the form
\[
 r_{u,v}c_{x,v+1},
 \qquad
 x\in\{u-1,u\},
 \qquad
 0\le v\le k-1,
\]
and its replacement path is
\[
 r_{u,v}-c_{x,v}-r_{u,v+1}-c_{x,v+1}.
\]
The only difference from the $\HH_k(d)\otimes\PP_j$ case is the weight.
If the corresponding short edge of $\HH_j(c)$ has weight $\alpha$, then
the negative edge has weight $-\alpha$, and \eqref{eq:negative-replacement}
is applied to the same replacement path with $w=\alpha$.

Similarly, after interchanging the two factors, if the $3$-chord position
lies in $\HH_j(c)$ while the position in $\HH_k(d)$ is a short-edge
position, then the corresponding negative edges and replacement paths are
obtained in the same way. Such a negative edge has the form
\[
 r_{u,v}c_{u+1,y},
 \qquad
 0\le u\le j-1,
 \qquad
 y\in\{v-1,v\},
\]
with replacement path
\[
 r_{u,v}-c_{u,y}-r_{u+1,v}-c_{u+1,y}.
\]
If the short edge of $\HH_k(d)$ corresponding to the position $(k-v,y)$
has weight $\beta$, then the negative edge has weight $-\beta$, and
\eqref{eq:negative-replacement} is applied with $w=\beta$.

Finally, if the corresponding positions in both factors are $3$-chord
positions, then the resulting edge has weight $(-1)(-1)=1$.

Apply the above replacements to all negative edges, and fix an edge
$f=r_{u,v}c_{x,y}$ of $\Gamma_{j,k}$.

Consider first the replacements arising from $3$-chord positions of
$\HH_k(d)$. As observed above, their replacement paths are exactly the
same as those in the $\HH_k(d)\otimes\PP_j$ case of
Lemma~\ref{lem:mixed-path}. Hence, by the counting already carried out
there, $f$ is contained in exactly one such replacement path if its
corresponding short edge in the path representation of $\HH_k(d)$ is of
type~S, and in exactly two if that short edge is of type~I.

Similarly, after interchanging the two factors, the same counting rule
holds for the short edge of $\HH_j(c)$ corresponding to $f$.

Let $\alpha$ and $\beta$ be the weights of the short edges corresponding
to $f$ in the path representations of $\HH_j(c)$ and $\HH_k(d)$,
respectively. Define
\[
\begin{aligned}
 m_1&=\begin{cases}
  1,&\text{if the corresponding short edge of $\HH_j(c)$ is of type~S},\\
  2,&\text{if it is of type~I},
 \end{cases}\\
 m_2&=\begin{cases}
  1,&\text{if the corresponding short edge of $\HH_k(d)$ is of type~S},\\
  2,&\text{if it is of type~I}.
 \end{cases}
\end{aligned}
\]

For a replacement arising from a $3$-chord in one factor, the
corresponding matrix position in the other factor is the same for all
three edges of the replacement path.

Before the replacements, the weight of $f$ is $\alpha\beta$. Each
replacement arising from a $3$-chord of $\HH_j(c)$ reduces the weight of
$f$ by $\beta$, and there are $m_1$ such replacements containing $f$.
Likewise, each replacement arising from a $3$-chord of $\HH_k(d)$
reduces the weight of $f$ by $\alpha$, and there are $m_2$ such
replacements containing $f$. Therefore the remaining weight of $f$ is
\begin{equation}\label{eq:residual-weight}
 \alpha\beta-m_1\beta-m_2\alpha.
\end{equation}
Apply~\eqref{eq:negative-replacement} to every negative edge and
sum the resulting exact identities. Let $R$ be the weighted graph with
underlying graph $\Gamma_{j,k}$, whose edge weights are the remaining
weights given by~\eqref{eq:residual-weight}. Then
\begin{equation}\label{eq:two-H-cut-decomposition}
 \cut_{\HH_k(d)\otimes\HH_j(c)}
 =\cut_R+\sum_{e\text{ positive}}\chi_e
 +\sum_{e\text{ negative}}|w_e|
       \left(\sum_{f\in P(e)}\chi_f-\chi_e\right).
\end{equation}
For each negative edge $e$, $w_e$ denotes its original weight and $P(e)$ is its associated replacement path.
Here the first sum is over the positive edges whose corresponding
positions in both factors are $3$-chord positions; each such edge has
weight one. Each correction term in the second sum is nonnegative
by~\eqref{eq:three-step}. Hence
$\cut_{\HH_k(d)\otimes\HH_j(c)}\ge\cut_R$.
We first treat the three minimal parameter cases.

\medskip\noindent
\emph{Both indices at least two: $j,k\ge2$ and $c=d=5$.}
By~\eqref{eq:residual-weight}, the edge weights in $R$ are
\begin{equation*}
\begin{array}{c|ccc}
 \text{type}&\mathrm{SS}&\mathrm{SI}\text{ or }\mathrm{IS}&\mathrm{II}\\ \hline
 \text{weight}&25-5-5=15&20-10-4=6&16-8-8=0.
 \end{array}
\end{equation*}
In this case, $h(\HH_j(5))h(\HH_k(5))=(5-1)^2=16$, so it remains to prove that every terminal-separating cut has weight at least $16$.

For nonterminal cuts, the contributions of the SS-edges and the mixed edges of $R$ suffice for the required lower bound, so we restrict attention to these edges. Since the underlying graph of $R$ is $\Gamma_{j,k}$, the SS-edges and the mixed edges of $R$ form precisely $\Gamma'_{j,k}$.

If a cut crosses at least two SS-edges, its weight in $R$ is at
least $30$. If it crosses an SS-edge and a mixed edge, the weight is
at least $15+6=21$. If it crosses exactly one SS-edge and no mixed
edge, that edge is the only edge of $\Gamma'_{j,k}$ crossing the cut. By
Lemma~\ref{lem:contracted-grid}, this graph is connected, so the edge
is a bridge. Lemma~\ref{lem:SS-bridges} then shows that this is a
terminal cut.

Now suppose no SS-edge crosses the cut. Then each SS-component lies
entirely on one side of the cut, so these components may be contracted
without losing information about the separation of the two terminals.
Hence it remains only to count mixed SI/IS edges between distinct SS-components.

By Lemma~\ref{lem:contracted-grid}, after contracting the SS-components,
the mixed edges form a rectangular grid on the component labels. The
terminal components have labels $(1,1)$ and $(j,k)$, and these labels
carry opposite signs. Consider the two boundary paths
\[
 (1,1),(2,1),\ldots,(j,1),(j,2),\ldots,(j,k)
\]
and
\[
 (1,1),(1,2),\ldots,(1,k),(2,k),\ldots,(j,k).
\]
The two paths are edge-disjoint, and each has endpoints of opposite
signs. Hence each path contains a pair of adjacent labels with opposite
signs, giving two distinct such boundary pairs. By~\eqref{eq:multiplicity},
each of these boundary pairs is joined by exactly two mixed edges in
$\Gamma'_{j,k}$. Therefore at least four mixed edges cross the original cut, and hence
$\operatorname{cut}_R\ge4\cdot6=24$.

Together with~\eqref{eq:two-H-cut-decomposition}, these estimates show
that every nonterminal cut of $\HH_k(5)\otimes\HH_j(5)$ has weight at
least $21=16+5$. A terminal cut of this Kronecker product has weight
$h(\HH_j(5))h(\HH_k(5))=16$ by~\eqref{eq:path-row-sums}. This proves both the equality assertion and~\eqref{eq:path-block-gap}
at $(c,d)=(5,5)$.

\medskip\noindent
\emph{One index equals one: $j=1,k\ge2$ and $c=4,d=5$.}
By Lemma~\ref{lem:H-structure}, $\HH_1(4)$ has no type~$I$ short
edge. Hence every edge of $R$ is of type~$SS$ or $SI$. By \eqref{eq:residual-weight}, their weights are respectively
$4\cdot5-5-4=11$ and $4\cdot4-4-8=4$. In this case,
$h(\HH_1(4))h(\HH_k(5))=(4-1)(5-1)=12$,
so it remains to prove that every terminal-separating cut has weight at
least $12$.

Since the underlying graph of $R$ is $\Gamma_{1,k}$, its $SS$-edges
and mixed edges form precisely $\Gamma'_{1,k}$.

If a cut crosses at least two $SS$-edges, then its weight in $R$ is at
least $2\cdot11=22$. If it crosses an $SS$-edge together with a mixed
edge, then its weight in $R$ is at least $11+4=15$.
If it crosses exactly one $SS$-edge and no mixed edge, then that edge
is the only edge of $\Gamma'_{1,k}$ crossing the cut; by
Lemmas~\ref{lem:contracted-grid} and \ref{lem:SS-bridges}, the cut is
terminal.

Now suppose that no $SS$-edge crosses the cut. Since $j=1$, the
contracted rectangular grid of Lemma~\ref{lem:contracted-grid} reduces
to the path $(1,1),(1,2),\ldots,(1,k)$. The two endpoint labels represent
the SS-components containing the terminals and therefore have opposite
signs; hence at least one sign change occurs along this path.
By \eqref{eq:multiplicity}, each pair of consecutive labels is joined by
three mixed edges. Hence, if there are at least two sign changes along
this path, then $\operatorname{cut}_R\ge2\cdot3\cdot4=24$.

It remains to consider the case of exactly one sign change, say between
$(1,h)$ and $(1,h+1)$, where $1\le h<k$. The three mixed edges joining
these two labels contribute $3\cdot4=12$ to $\operatorname{cut}_R$.

To obtain strict inequality in this remaining case, consider the edge
$r_{0,h-1}c_{1,h}$. Here the column coordinates satisfy $1=0+1$ and
$h=(h-1)+1$.
By the characterisation of the $3$-chord positions above, the
corresponding positions in both $\HH_1(4)$ and $\HH_k(5)$ are
$3$-chord positions. Hence this is one of the additional positive
edges appearing in the first sum of
\eqref{eq:two-H-cut-decomposition}, and it has weight one.

By the label formulas in \eqref{eq:component-labels}, its endpoints
belong to the $SS$-components labelled $(1,\max\{1,h-1\})$ and
$(1,h+1)$. Because the unique sign change occurs between $h$ and $h+1$,
and $\max\{1,h-1\}\le h$,
these two components have opposite signs. Thus this additional positive
edge also crosses the cut. Together with the three mixed edges above,
\[
 \operatorname{cut}_{\HH_k(5)\otimes\HH_1(4)}
 \ge12+1=13>12.
\]

Terminal cuts have weight $h(\HH_1(4))h(\HH_k(5))=12$ by \eqref{eq:path-row-sums}. The case $k=1,j\ge2$ follows by
interchanging the two coordinates.

\medskip\noindent
\emph{Both indices equal one: $j=k=1$ and $c=d=4$.}
By Lemma~\ref{lem:H-structure}, in the path representations of both
factors, every short edge is of type~$S$. Hence every edge of $R$ is of type $SS$, and by
\eqref{eq:residual-weight} its weight is $4\cdot4-4-4=8$. In this case,
$h(\HH_1(4))^2=(4-1)^2=9$,
so it remains to prove that every terminal-separating cut has weight at
least $9$.

Since the underlying graph of $R$ is $\Gamma_{1,1}$,
Lemma~\ref{lem:product-path} implies that every nonterminal cut separating
the two terminals crosses at least two edges of $R$. Therefore
$\operatorname{cut}_R\ge2\cdot8=16>9$.
By \eqref{eq:two-H-cut-decomposition}, the weight of the corresponding
cut of $\HH_1(4)\otimes\HH_1(4)$ is at least $\operatorname{cut}_R$.
Hence every nonterminal cut has weight strictly greater than $9$, whereas
the terminal cuts have weight $h(\HH_1(4))^2=9$ by \eqref{eq:path-row-sums}.

\medskip
To pass from the minimal admissible parameters to arbitrary $c$ and $d$,
let $c_0=4$ if $j=1$ and $c_0=5$ otherwise, and define $d_0$ in the same
way. By the definition of $\HH_j(c)$,
$\HH_j(c)=\HH_j(c_0)+(c-c_0)\PP_j$, and similarly for $\HH_k(d)$. Since $\Q$ is bilinear in its matrix arguments for each fixed cut
by~\eqref{eq:Q-general},
\begin{equation}\label{eq:parameter-expansion}
\begin{aligned}
 \Q(\HH_j(c),\HH_k(d))
 ={}&\Q(\HH_j(c_0),\HH_k(d_0))\\
 &+(c-c_0)\Q(\PP_j,\HH_k(d_0))\\
 &+(d-d_0)\Q(\HH_j(c_0),\PP_k)\\
 &+(c-c_0)(d-d_0)\Q(\PP_j,\PP_k).
\end{aligned}
\end{equation}
Here $c-c_0,d-d_0\ge0$, and the last three terms are nonnegative by
Lemma~\ref{lem:mixed-path}. Hence
$\Q(\HH_j(c),\HH_k(d))\ge\Q(\HH_j(c_0),\HH_k(d_0))$.
By the three minimal-parameter cases proved above, the right-hand side is
strictly positive for every nonterminal cut. On the other hand, terminal
cuts have $\Q=0$ for all parameters by~\eqref{eq:path-row-sums}.
Therefore equality in~\eqref{eq:path-block-bound} holds exactly for the
terminal cuts. When $j,k\ge2$, the base-case bound
$\Q(\HH_j(5),\HH_k(5))\ge5$ is preserved by~\eqref{eq:parameter-expansion},
proving~\eqref{eq:path-block-gap}. This proves all assertions.
\end{proof}

\section{The two maximising divisor sets}\label{sec:proof-main}

We now apply Theorem~\ref{thm:path-blocks} to the decomposition of
$\BB_a(p)$ and $\BB_b(q)$. Odd exponents give only H matrices.
Even exponents also give a last-coordinate term, whose possible loss
will be bounded using Lemma~\ref{lem:single-path}.

\begin{proposition}\label{prop:odd-odd}
Let $a,b$ be positive odd integers and let $p,q\ge5$ be real.
Every cut separating the last row and column vertices of
$\BB_b(q)\otimes\BB_a(p)$ has weight at least $g_a(p)g_b(q)$.
Equality holds exactly at the two terminal cuts.
\end{proposition}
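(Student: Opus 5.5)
For odd $a,b$, the decomposition \eqref{eq:B-decomposition} writes each factor as a nonnegative combination of embedded H matrices only. Concretely,
\[
 \BB_a(p)=\sum_{i}\alpha_i\,\emb_{a,j_i}(A_i),\qquad
 \BB_b(q)=\sum_{l}\beta_l\,\emb_{b,k_l}(D_l).
\]
Here $A_i=\HH_{j_i}(p)$ with $j_i\in\{a,a-2,\ldots,3\}$ and $\alpha_i=p^{j_i-2}(p-1)>0$, together with the last block $A=\HH_1(p-1)$ with coefficient $1$. The factor $\BB_b(q)$ is decomposed in the same way. Since $p,q\ge5$, every block is admissible in Theorem~\ref{thm:path-blocks}: $\HH_j(p)$ with $j\ge2$ needs $p\ge5$, and $\HH_1(p-1)$ needs $p-1\ge4$. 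By bilinearity of the Kronecker product,
\[
 \BB_b(q)\otimes\BB_a(p)=\sum_{i,l}\alpha_i\beta_l\,\emb_{b,k_l}(D_l)\otimes\emb_{a,j_i}(A_i).
\]
The cut functional is linear in the weight matrix for a fixed pair $(\zeta,\xi)$. So the cut of the full product is $\sum_{i,l}\alpha_i\beta_l$ times the cut of $\emb(D_l)\otimes\emb(A_i)$.

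The key observation is that $\emb_{b,k}(D)\otimes\emb_{a,j}(A)$ is, up to zero rows and columns, the matrix $D\otimes A$ placed on the coordinate block $\{a-j,\ldots,a\}\times\{b-k,\ldots,b\}$. This block contains the terminal index $t=(a,b)$ as its own last coordinate. A fixed cut $(\zeta,\xi)$ of the full product with $\zeta_t=1$, $\xi_t=-1$ therefore restricts to a cut of $D\otimes A$ that separates its terminals. Its weight equals the embedded cut weight, because zero entries contribute nothing. Theorem~\ref{thm:path-blocks} then gives that each block cut is at least $h(A_i)h(D_l)$. Summing yields
\[
 \cut\ge\sum_{i,l}\alpha_i\beta_l h(A_i)h(D_l)=h\bigl(\BB_a(p)\bigr)\,h\bigl(\BB_b(q)\bigr)=g_a(p)g_b(q).
\]
The middle equality uses that $h$ is additive, is preserved by the embedding $\emb$ (the last row is embedded into the last row), and is multiplicative on Kronecker products. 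The last equality is \eqref{eq:B-row-sum}. This proves the lower bound.

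For the equality case, suppose the total equals $g_a(p)g_b(q)$. Since every coefficient $\alpha_i\beta_l$ is positive, each block inequality must then be an equality. In particular this holds for the outermost block pair $A=\HH_a(\cdot)$, $D=\HH_b(\cdot)$, which lives on the full index set; if $a=1$ it is $\HH_1(p-1)$. By Theorem~\ref{thm:path-blocks}, this pair consists of two H matrices, so its restricted cut must be one of the two terminal cuts. Because this block covers every coordinate of $\mathcal I$, the restriction is the whole cut. Hence $(\zeta,\xi)$ is one of the two terminal cuts \eqref{eq:terminal-cuts}. Conversely, both terminal cuts have weight equal to the last row or column sum, which is $g_a(p)g_b(q)$.

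The step I expect to need the most care is the claim that restricting to an embedded block preserves the terminal-separation hypothesis and that the terminal cuts of the big matrix restrict to terminal cuts of each block. This follows because every embedding is anchored at the last coordinate, so I would verify the index bookkeeping for $\emb$ under the vectorisation order explicitly.
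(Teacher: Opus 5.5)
Your proposal is correct and follows the paper's proof: the same decomposition into embedded H blocks with positive coefficients, and the same restriction of the global cut to lower-right rectangles anchored at $t=(a,b)$. It also applies Theorem~\ref{thm:path-blocks} blockwise in the same way, and uses the outermost block pair on the full index set for the equality case. Your equality argument (every block term must vanish, so the outermost one forces a terminal cut) is just the contrapositive of the paper's strictness argument for nonterminal cuts.
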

\begin{proof}
Since $a$ and $b$ are odd,~\eqref{eq:B-decomposition} gives
\[
\begin{aligned}
 \BB_a(p)
 &=\sum_{\substack{j=a,a-2,\ldots\\ j\ge3}}
   p^{j-2}(p-1)\emb_{a,j}(\HH_j(p))
   +\emb_{a,1}(\HH_1(p-1)),\\
 \BB_b(q)
 &=\sum_{\substack{k=b,b-2,\ldots\\ k\ge3}}
   q^{k-2}(q-1)\emb_{b,k}(\HH_k(q))
   +\emb_{b,1}(\HH_1(q-1)).
\end{aligned}
\]
For the indices occurring above, write
\[
\begin{gathered}
 c_j=\begin{cases}p,&j\ge3,\\ p-1,&j=1,\end{cases}
 \qquad
 d_k=\begin{cases}q,&k\ge3,\\ q-1,&k=1,\end{cases}\\
 \alpha_j=\begin{cases}p^{j-2}(p-1),&j\ge3,\\1,&j=1,\end{cases}
 \qquad
 \beta_k=\begin{cases}q^{k-2}(q-1),&k\ge3,\\1,&k=1.\end{cases}
\end{gathered}
\]
All these coefficients are positive. Moreover, for $j\ge3$ the
corresponding factor is $\HH_j(p)$ with $p\ge5$, whereas for $j=1$
it is $\HH_1(p-1)$ with $p-1\ge4$; the same statements hold on the
$q$-side. Thus every pair of H factors occurring in the two
decompositions satisfies the parameter assumptions of
Theorem~\ref{thm:path-blocks}.

Fix a cut separating the last row and column vertices of
$\BB_b(q)\otimes\BB_a(p)$. For a pair of indices $j,k$, the matrices
$\emb_{a,j}(\HH_j(c_j))$ and $\emb_{b,k}(\HH_k(d_k))$ are supported on
the coordinate sets $\{a-j,\ldots,a\}$ and $\{b-k,\ldots,b\}$,
respectively. Hence, in the original row and column coordinates, their
tensor product is supported on the lower-right rectangle
$\{a-j,\ldots,a\}\times\{b-k,\ldots,b\}$.
This rectangle contains the final coordinate $(a,b)$, so its last row
and column vertices are precisely the two global terminals. Therefore
the restriction of the fixed global cut to this rectangle still
separates the two terminals.

The embedded tensor product differs from
$\HH_k(d_k)\otimes\HH_j(c_j)$ only by zero rows and columns outside
this lower-right rectangle, and these zero entries make no contribution
to the cut weight. When the fixed global cut is restricted to this
rectangle and the coordinates are relabelled as
$\{0,\ldots,j\}\times\{0,\ldots,k\}$, the resulting cut weight is exactly
that of the corresponding cut of $\HH_k(d_k)\otimes\HH_j(c_j)$. Hence Theorem~\ref{thm:path-blocks}
applies and gives
\[
 \cut_{\HH_k(d_k)\otimes\HH_j(c_j)}
 \ge h(\HH_j(c_j))h(\HH_k(d_k)).
\]

By bilinearity of the Kronecker product,
\[
 \BB_b(q)\otimes\BB_a(p)
 =\sum_{j,k}\alpha_j\beta_k
 \bigl(\emb_{b,k}(\HH_k(d_k))\otimes
       \emb_{a,j}(\HH_j(c_j))\bigr).
\]
For the fixed cut, the cut sum is linear in the matrix entries. Since
the zero entries introduced by the embeddings do not contribute, the
preceding blockwise bound yields
\[
\begin{aligned}
 \cut_{\BB_b(q)\otimes\BB_a(p)}
 &\ge \sum_{j,k}\alpha_j\beta_k
       h(\HH_j(c_j))h(\HH_k(d_k))\\
 &=\left(\sum_j\alpha_jh(\HH_j(c_j))\right)
   \left(\sum_k\beta_kh(\HH_k(d_k))\right).
\end{aligned}
\]
Since $\emb_{a,j}$ places the matrix in the lower-right corner and all
entries outside the embedded block are zero, the last row sum of the embedded
matrix is the same as that of the original matrix; thus
$h\bigl(\emb_{a,j}(\HH_j(c_j))\bigr)=h(\HH_j(c_j))$. By linearity of
$h$ and the above decompositions,
$h(\BB_a(p))=\sum_j\alpha_jh(\HH_j(c_j))$ and
$h(\BB_b(q))=\sum_k\beta_kh(\HH_k(d_k))$. Consequently,
$\cut_{\BB_b(q)\otimes\BB_a(p)}\ge h(\BB_a(p))h(\BB_b(q))$.
By~\eqref{eq:B-row-sum}, $h(\BB_a(p))=g_a(p)$ and
$h(\BB_b(q))=g_b(q)$, and therefore
\begin{equation}\label{eq:odd-cut-bound}
 \cut_{\BB_b(q)\otimes\BB_a(p)}-g_a(p)g_b(q)\ge0.
\end{equation}

It remains to determine when equality can occur. For the fixed global
cut, bilinearity of $\Q$ and the above decompositions give
\[
\Q(\BB_a(p),\BB_b(q))
=\sum_{j,k}\alpha_j\beta_k
\Q\!\left(
\emb_{a,j}(\HH_j(c_j)),
\emb_{b,k}(\HH_k(d_k))
\right).
\]
For each pair $j,k$, restricting the fixed global cut to the
corresponding lower-right rectangle and relabelling the coordinates
yields a cut of $\HH_k(d_k)\otimes\HH_j(c_j)$ that separates its two
terminals. Since the embeddings introduce only zero rows and columns
outside the blocks and preserve the last row sums,
\[
\Q\!\left(
\emb_{a,j}(\HH_j(c_j)),
\emb_{b,k}(\HH_k(d_k))
\right)
=\Q(\HH_j(c_j),\HH_k(d_k))
\]
for this cut. Hence every term in the sum is nonnegative by
Theorem~\ref{thm:path-blocks}.

Suppose now that the global cut is nonterminal. For $j=a$ and $k=b$,
the embeddings are trivial:
\[
\emb_{a,a}(\HH_a(c_a))=\HH_a(c_a),
\qquad
\emb_{b,b}(\HH_b(d_b))=\HH_b(d_b).
\]
Thus the corresponding term is
\[
\Q(\HH_a(c_a),\HH_b(d_b))
\]
for the original global cut itself. Since this cut is nonterminal, the
equality assertion of Theorem~\ref{thm:path-blocks} gives
\[
\Q(\HH_a(c_a),\HH_b(d_b))>0.
\]
Since $\alpha_a\beta_b>0$, while all the other terms in the sum are
nonnegative, it follows that
\[
\Q(\BB_a(p),\BB_b(q))>0.
\]
Equivalently,
\[
\cut_{\BB_b(q)\otimes\BB_a(p)}>g_a(p)g_b(q)
\]
for every nonterminal cut.

Finally, by~\eqref{eq:B-row-sum}, the last row sums of $\BB_a(p)$ and
$\BB_b(q)$ are $g_a(p)$ and $g_b(q)$, respectively. Hence the last row
sum of their Kronecker product is $g_a(p)g_b(q)$. Since the matrices
are symmetric, the last column sum has the same value. The two terminal
cuts therefore attain equality in~\eqref{eq:odd-cut-bound}.
\end{proof}

\begin{proposition}\label{prop:even-even}
The conclusion of Proposition~\ref{prop:odd-odd} also holds when
$a,b\ge2$ are even and $p,q\ge5$ are real.
\end{proposition}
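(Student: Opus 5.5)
Write the decompositions from \eqref{eq:B-decomposition} for even $a,b\ge2$ as
\[
 \BB_a(p)=\sum_{j}\alpha_j\emb_{a,j}(\HH_j(p))+\EE_a,\qquad
 \BB_b(q)=\sum_{k}\beta_k\emb_{b,k}(\HH_k(q))+\EE_b,
\]
where $j,k$ run over even indices $\ge2$ and $\alpha_j=p^{j-2}(p-1)$, $\beta_k=q^{k-2}(q-1)$. Fix a cut separating the terminals. Bilinearity of $\Q$ and the restriction-and-relabelling argument from Proposition~\ref{prop:odd-odd} decompose $\Q(\BB_a(p),\BB_b(q))$ into three groups of terms.

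First, the $\HH\otimes\HH$ terms are each nonnegative by Theorem~\ref{thm:path-blocks}. The top term $(j,k)=(a,b)$ is $\alpha_a\beta_b\,\Q(\HH_a(p),\HH_b(q))$, and for a nonterminal cut the gap \eqref{eq:path-block-gap} makes it at least $5\alpha_a\beta_b$.

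Second, the mixed terms $\Q(\emb(\HH_j),\EE_b)$ and $\Q(\EE_a,\emb(\HH_k))$ need handling. Since $\EE_b$ is supported only on the last coordinate $b$, the matrix $\EE_b\otimes\emb_{a,j}(\HH_j(p))$ is a copy of $\HH_j(p)$ on the row and column vertices with second coordinate $b$. The restricted cut still separates the two terminals of that copy, so Lemma~\ref{lem:single-path} gives $\Q\ge-2$. These terms may be negative.

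Third, the term $\Q(\EE_a,\EE_b)$ is $\ind_{\{\zeta_t\neq\xi_t\}}-1=0$.

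For a nonterminal cut this gives
\[
 \Q(\BB_a,\BB_b)\ \ge\ 5\alpha_a\beta_b-2\sum_j\alpha_j-2\sum_k\beta_k .
\]
The main step is showing this is positive. Bounding the geometric sums, $\sum_{j\le a}\alpha_j\le p^{a-2}(p-1)\cdot\frac{p^2}{p^2-1}<\frac{25}{24}p^{a-1}$, and similarly on the $q$ side. Meanwhile $\alpha_a\beta_b=p^{a-2}(p-1)q^{b-2}(q-1)\ge\frac{16}{25}p^{a-1}q^{b-1}$. Writing $P=p^{a-1},Q=q^{b-1}\ge5$, we need $\frac{16}{5}PQ>\frac{25}{12}(P+Q)$. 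This holds because $PQ\ge\frac52(P+Q)$ when $P,Q\ge5$. If this crude count turns out too tight, I would first keep the top mixed terms separately, since they share the nonterminal structure.

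For terminal cuts the weight equals the last row (or column) sum $h(\BB_a)h(\BB_b)=g_a(p)g_b(q)$ by \eqref{eq:B-row-sum}. That supplies equality, and the strict inequality above shows no other cut attains it.

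The one subtle point is the negative mixed terms: the lower bound $\ge0$ for terminal cuts does not come termwise. It does not need to, since terminal cuts are evaluated directly. For nonterminal cuts the only thing required is that the gap $5$ in \eqref{eq:path-block-gap} dominates, which is exactly why the paper proved that gap.
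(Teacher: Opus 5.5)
Your proposal is correct and follows the paper's proof essentially step for step. It uses the same bilinear expansion of $\Q(\BB_a(p),\BB_b(q))$, the gap $\Q(\HH_a(p),\HH_b(q))\ge5$ for the outermost block, Lemma~\ref{lem:single-path} to bound each mixed term below by $-2$, $\Q(\EE_a,\EE_b)=0$, and a geometric-series bound on the block weights; the only difference is that you normalise the final inequality by $p^{a-1},q^{b-1}$, whereas the paper uses $\alpha=w_a(p)$, $\beta=w_b(q)$ together with $W_p\le\frac{25}{24}\alpha$.
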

\begin{proof}
For even $a$, formula~\eqref{eq:B-decomposition} becomes
\[
 \BB_a(p)=\sum_{j=a,a-2,\ldots,2}
              w_j(p)\emb_{a,j}(\HH_j(p))+\EE_a,
 \qquad w_j(p)=(p-1)p^{j-2}.
\]
Let $W_p=\sum_j w_j(p)=(p^a-1)/(p+1)$ and
$\alpha=w_a(p)=(p-1)p^{a-2}$. Then
\[
 \frac{W_p}{\alpha}
 =1+p^{-2}+\cdots+p^{-(a-2)}
 \le\frac1{1-p^{-2}}\le\frac{25}{24},\qquad \alpha\ge4.
\]
Similarly,
$\BB_b(q)=\sum_{k=b,b-2,\ldots,2}w_k(q)\emb_{b,k}(\HH_k(q))+\EE_b$,
and we set $W_q=\sum_k w_k(q)$ and
$\beta=w_b(q)=(q-1)q^{b-2}$. Thus
$W_q\le(25/24)\beta$ and $\beta\ge4$.

Fix a nonterminal cut separating the last row and column vertices of
$\BB_b(q)\otimes\BB_a(p)$. Since $\Q$ is bilinear in its matrix arguments for each fixed cut by
\eqref{eq:Q-general}, the decompositions above give
\begin{align}\label{eq:even-Q-expansion}
 \Q(\BB_a(p),\BB_b(q))
={}&\sum_{j,k}w_j(p)w_k(q)
 \Q\!\left(\emb_{a,j}(\HH_j(p)),\emb_{b,k}(\HH_k(q))\right)\notag\\
&+\sum_j w_j(p)
 \Q\!\left(\emb_{a,j}(\HH_j(p)),\EE_b\right)\notag\\
&+\sum_k w_k(q)
 \Q\!\left(\EE_a,\emb_{b,k}(\HH_k(q))\right)
 +\Q(\EE_a,\EE_b).
\end{align}
We estimate these four parts separately.

Each summand in the first sum in~\eqref{eq:even-Q-expansion}
corresponds to an H--H block pair. As in Proposition~\ref{prop:odd-odd}, the
corresponding lower-right rectangle contains the two global terminals,
and the zero entries introduced by the embeddings make no contribution
to the cut weight. Hence the restriction of the global cut to this
rectangle is still terminal-separating, and Theorem~\ref{thm:path-blocks}
gives $\Q\!\left(\emb_{a,j}(\HH_j(p)),\emb_{b,k}(\HH_k(q))\right)\ge0$.
For the outermost pair $j=a$, $k=b$, the two blocks use all coordinates,
so their tensor product occupies the full coordinate rectangle.
Restricting the global cut to this pair therefore removes no vertex, and
the restricted cut is the global cut itself, hence is still nonterminal.
Since $a,b\ge2$ and $p,q\ge5$, the stronger estimate
\eqref{eq:path-block-gap} gives $\Q(\HH_a(p),\HH_b(q))\ge5$, and the
coefficient of this outermost term is $w_a(p)w_b(q)=\alpha\beta$.
Therefore the total contribution of all H--H terms is at least
\begin{equation}\label{eq:even-HH}
 5\alpha\beta.
\end{equation}

We next consider the mixed terms
$\Q\!\left(\emb_{a,j}(\HH_j(p)),\EE_b\right)$. Since
$\EE_b=e_be_b^{\mathsf T}$, the tensor product
$\EE_b\otimes\emb_{a,j}(\HH_j(p))$ has only one nonzero block, namely $\emb_{a,j}(\HH_j(p))$ in its
lower-right corner. All other blocks are zero and hence make no
contribution to the cut weight. The embedding
$\emb_{a,j}(\HH_j(p))$ itself differs from $\HH_j(p)$ only by zero
rows and columns preceding its lower-right support. After deleting this
zero padding and relabelling the remaining coordinates, the nonzero
part is exactly one copy of $\HH_j(p)$.

Since $\EE_b=e_be_b^{\mathsf T}$, the $q$-coordinates of both the row and column vertices are fixed at $b$. Under the lower-right embedding $\emb_{a,j}$, the last row and last column of $\HH_j(p)$ correspond to the $p$-coordinate $a$. Hence the two terminal vertices of this copy are precisely the two global terminal vertices indexed by $t=(a,b)$. Therefore, restricting the global cut to this copy still separates its two terminals.

Because $j\ge2$ and $p\ge5$, Lemma~\ref{lem:single-path} gives
$\cut_{\HH_j(p)}-h(\HH_j(p))\ge-2$. Moreover, $h(\EE_b)=1$ and
$h\!\left(\emb_{a,j}(\HH_j(p))\right)=h(\HH_j(p))$ because a
lower-right embedding preserves the last row sum. Since the zero padding
also makes no contribution to the cut weight, the corresponding mixed
difference satisfies
$\Q\!\left(\emb_{a,j}(\HH_j(p)),\EE_b\right)
=\cut_{\HH_j(p)}-h(\HH_j(p))\ge-2$.
Multiplying by $w_j(p)$ and summing over $j$ yields
\begin{equation*}
 \sum_j w_j(p)
 \Q\!\left(\emb_{a,j}(\HH_j(p)),\EE_b\right)
 \ge-2W_p.
\end{equation*}

The other mixed terms are handled symmetrically. Since
$\EE_a=e_ae_a^{\mathsf T}$, the tensor product
$\emb_{b,k}(\HH_k(q))\otimes\EE_a$ has, after its zero padding is removed, exactly one nonzero copy of
$\HH_k(q)$. Its two endpoints again coincide with the two global
terminals, so the restricted cut separates them. Lemma~\ref{lem:single-path}
therefore gives
$\Q\!\left(\EE_a,\emb_{b,k}(\HH_k(q))\right)\ge-2$, and hence
\begin{equation*}
 \sum_k w_k(q)
 \Q\!\left(\EE_a,\emb_{b,k}(\HH_k(q))\right)
 \ge-2W_q.
\end{equation*}

Finally, $\EE_b\otimes\EE_a$ has exactly one nonzero entry, of weight
one, joining the two global terminals. Since the fixed cut separates
these terminals, this edge crosses the cut. Also $h(\EE_a)=h(\EE_b)=1$.
Therefore
\begin{equation}\label{eq:even-EE}
 \Q(\EE_a,\EE_b)=1-1=0.
\end{equation}

Combining~\eqref{eq:even-HH}--\eqref{eq:even-EE}, we obtain
\begin{equation}\label{eq:even-cut-estimate}
\begin{aligned}
 \Q(\BB_a(p),\BB_b(q))
 &\ge5\alpha\beta-2W_p-2W_q\\
 &\ge5\alpha\beta-\frac{25}{12}(\alpha+\beta).
\end{aligned}
\end{equation}
Since $\alpha,\beta\ge4$, we have
$\frac1\alpha+\frac1\beta\le\frac12$, and hence
$\alpha+\beta\le\frac{\alpha\beta}{2}$.
Substitution into~\eqref{eq:even-cut-estimate} gives
\begin{equation*}
\begin{aligned}
 \Q(\BB_a(p),\BB_b(q))
 &\ge5\alpha\beta-\frac{25}{24}\alpha\beta\\
 &=\frac{95}{24}\alpha\beta>0.
\end{aligned}
\end{equation*}
Thus every nonterminal cut has weight strictly greater than
$g_a(p)g_b(q)$.

For either terminal cut, the cut sum is the last row or column sum of
$\BB_b(q)\otimes\BB_a(p)$. By~\eqref{eq:B-row-sum},
$h(\BB_a(p))=g_a(p)$ and $h(\BB_b(q))=g_b(q)$, so the last row sum of
the Kronecker product is
$h(\BB_b(q))h(\BB_a(p))=g_b(q)g_a(p)$.
Since the matrices are symmetric, the last column sum has the same
value. Hence both terminal cuts attain $g_a(p)g_b(q)$, and equality
holds exactly for the two terminal cuts.
\end{proof}

\begin{proof}[Proof of Theorem~\ref{thm:main}]
Because $a+b$ is even, the two exponents are either both odd or both
even. Propositions~\ref{prop:odd-odd} and~\ref{prop:even-even} prove
that every cut in Lemma~\ref{lem:energy-cut} has weight at least
$\gamma=g_a(p)g_b(q)$, with equality exactly at the two terminal cuts.
That lemma gives
\[
 \max_{y_t=-1}F(y)
 =p^aq^b+d_a(p)d_b(q)-2g_a(p)g_b(q),
\]
attained only by $y=-s$ and $y=s-2e_t$. Dividing by two in
\eqref{eq:F} proves~\eqref{eq:main-energy}.

For $y=-s$, equation~\eqref{eq:centred-vector} gives
$x_{i,j}=(1-(-1)^{i+j})/2$, so $x$ selects precisely $\Dodd$.
For $y=s-2e_t$, it gives
$x=(\one+s)/2-e_t$, which selects the even checkerboard and removes
$n$; this is $\Deven$. Both sets contain proper divisors only: since $a+b$ is even, $n$ is
absent from the first set and removed from the second. They are nonempty, because $p,q\in\Dodd$
and $1\in\Deven$. The characterisation of the maximising vectors $y$ therefore gives
exactly the two divisor sets in~\eqref{eq:maximising-sets}.
\end{proof}

\section*{Acknowledgments}
This work was supported by the National Natural Science Foundation of China under Grant No.~12071351.

\section*{Declaration of generative AI and AI-assisted technologies in the manuscript preparation process}
During the preparation of this manuscript, the authors used ChatGPT to assist in drafting and refining the presentation of mathematical statements, improving the clarity of the writing, and suggesting candidate auxiliary lemmas within the authors' established research approach. The central mathematical ideas, main theorems, and overall proof strategy were developed by the authors. Any AI-suggested auxiliary lemmas incorporated into the manuscript were selected and independently verified by the authors. All AI-assisted content was carefully reviewed and edited, and the authors take full responsibility for the manuscript.

\end{document}